\begin{document}

\newcounter{lemma}
\newcommand{\lemma}{\par \refstepcounter{lemma}%
{\bf Лемма \arabic{lemma}.}}

\newcounter{corollary}
\newcommand{\corollary}{\par \refstepcounter{corollary}%
{\bf Следствие \arabic{corollary}.}}

\newcounter{remark}
\newcommand{\remark}{\par \refstepcounter{remark}%
{\bf Замечание \arabic{remark}.}}

\newcounter{theorem}
\newcommand{\theorem}{\par \refstepcounter{theorem}%
{\bf Теорема \arabic{theorem}.}}

\newcounter{proposition}
\newcommand{\proposition}{\par \refstepcounter{proposition}%
{\bf Предложение \arabic{proposition}.}}

\renewcommand{\refname}{\centerline{\bf Список литературы}}

\newcommand{\proof}{{\it Доказательство.\,\,}}

\noindent УДК 517.5

{\bf Е.А.~Севостьянов} (Житомирский государственный университет им.\
И.~Франко)

\medskip
{\bf Є.О.~Севостьянов} (Житомирський державний університет ім.\
І.~Франко)

\medskip
{\bf E.A.~Sevost'yanov} (Zhitomir Ivan Franko State University)

\medskip
{\bf О граничном поведении отображений класса Соболева и
Орлича--Соболева}

{\bf Про граничну поведінку відображень класу Соболєва і
Орліча--Соболєва}

{\bf On boundary behavior of mappings of Sobolev and Orlicz--Sobolev
class}

\medskip
Изучается граничное поведение замкнутых открытых дискретных
отображений классов Соболева и Орлича--Соболева в ${\Bbb R}^n,$
$n\ge 2.$ Установлено, что указанные отображения $f$ имеют
непрерывное продолжение в граничную точку $x_0\in \partial D$
области $D\subset {\Bbb R}^n,$ как только их внутренняя дилатация
порядка $p\geqslant 1$ имеет мажоранту класса $FMO$ (конечного
среднего колебания) в указанной точке. Другим достаточным условием
возможности непрерывного продолжения указанных отображений является
расходимость некоторого интеграла.

\medskip
Вивчається гранична поведінка замкнених відкритих дискретних
відображень класів Соболєва і Орліча--Соболєва в ${\Bbb R}^n,$ $n\ge
2.$ Встановлено, що вказані відображення $f$ мають неперервне
продовження в межову точку $x_0\in \partial D$ області $D\subset
{\Bbb R}^n,$ як тільки їх внутрішня дилатація порядку $p\geqslant 1$
має мажоранту класу $FMO$ (скінченного середнього коливання) в
зазначеній точці. Іншою достатньою умовою неперервного продовження
вказаних відображень є розбіжність певного інтеграла.

\medskip
A boundary behavior of closed open discrete mappings of Sobolev and
Orlicz--Sobolev classes in ${\Bbb R}^n,$ $n\ge 3,$ is studied. It is
proved that, mappings mentioned above have a continuous extension to
boundary point $x_0$ of a domain $D$ whenever its inner dilatation
of order $p$ has a majorant $FMO$ (finite mean oscillation) at the
point. Another sufficient condition of possibility of continuous
extension is a divergence of some integral.

\newpage

{\bf 1. Введение.} В сравнительно недавних работах \cite{KRSS} и
\cite{KR$_1$} получены некоторые важные результаты о локальном и
граничном поведении классов Орлича--Соболева в $n$-мерном евклидовом
пространстве. В частности, в \cite[лемма~5 и теорема~15]{KRSS}
утверждается возможность непрерывного продолжения на границу
гомеоморфизмов указанных классов, как только их внешняя дилатация в
степени $n-1$ удовлетворяет некоторым ограничениям интегрального
характера. Ввиду \cite[теорема~2.1]{KR$_1$} и \cite[лемма~9.4]{MRSY}
эти утверждения могут быть несколько усилены: вместо внешней
дилатации может быть взята внутренняя дилатация в первой степени.
Основная цель настоящей заметки -- показать, что указанные
результаты справедливы не только для гомеоморфизмов, но и для более
широких классов открытых замкнутых дискретных отображений. Перейдём
к определениям.

\medskip
Всюду далее $D$ -- область в ${\Bbb R}^n,$ $n\ge 2,$ $m$ -- мера
Лебега в ${\Bbb R}^n.$ Здесь и далее {\it предельным множеством
отображения $f$ относительно множества $E\subset \overline{{\Bbb
R}^n}$} называется множество
$C(f, E):=\left\{y\in {\Bbb R}^n: \,\exists\,x_0\in E:
y=\lim\limits_{m\rightarrow \infty} f(x_m), x_m\rightarrow
x_0\right\}.$ Отображение $f:D\rightarrow {\Bbb R}^n$ называется
{\it дискретным}, если прообраз $f^{-1}\left(y\right)$ каждой точки
$y\in{\Bbb R}^n$ состоит только из изолированных точек. Отображение
$f:D\rightarrow {\Bbb R}^n$ называется {\it открытым}, если образ
любого открытого множества $U\subset D$ является открытым множеством
в ${\Bbb R}^n.$ Отображение $f:D\rightarrow {\Bbb R}^n$ называется
{\it сохраняющим границу отображением} (см. \cite[разд. 3, гл.
II]{Vu$_1$}), если выполнено соотношение $C(f,
\partial D)\subset \partial f(D).$ Отметим, что условие сохранения границы для
открытых дискретных отображений эквивалентно тому, что отображение
$f$ {\it замкнуто} (т.е., $f(A)$ замкнуто в $f(D)$ для любого
замкнутого $A\subset D$), а также тому, что $f^{\,-1}(K)$ компактно
в $D$ для любого компакта $K\subset f(D)$ (см.
\cite[теорема~3.3]{Vu$_1$}). Отображение $f:D\rightarrow {\Bbb R}^n$
называется {\it отображением с конечным искажением}, пишем $f\in
FD,$ если $f\in W_{loc}^{1,1}(D)$ и для некоторой функции $K(x):
D\rightarrow [1,\infty)$ выполнено условие
$\Vert f^{\,\prime}\left(x\right) \Vert^{n}\le K(x)\cdot |J(x,f)|$
при почти всех $x\in D,$ где $\Vert
f^{\,\prime}(x)\Vert=\max\limits_{h\in {\Bbb R}^n \setminus \{0\}}
\frac {|f^{\,\prime}(x)h|}{|h|}$ (см. \cite[п.~6.3, гл.~VI]{IM}.
Полагаем $l\left(f^{\,\prime}(x)\right)\,=\,\,\,\min\limits_{h\in
{\Bbb R}^n \setminus \{0\}} \frac {|f^{\,\prime}(x)h|}{|h|}.$
Отметим, что для отображений с конечным искажением и произвольного
$p\geqslant 1$ корректно определена и почти всюду конечна так
называемая {\it внутренняя дилатация $K_{I, p}(x,f)$ отображения $f$
порядка $p$ в точке $x$}, определяемая равенствами
\begin{equation}\label{eq0.1.1A}
K_{I, p}(x,f)\quad =\quad\left\{
\begin{array}{rr}
\frac{|J(x,f)|}{{l\left(f^{\,\prime}(x)\right)}^p}, & J(x,f)\ne 0,\\
1,  &  f^{\,\prime}(x)=0, \\
\infty, & \text{в\,\,остальных\,\,случаях}
\end{array}
\right.\,.
\end{equation}
Пусть $\varphi:[0,\infty)\rightarrow[0,\infty)$ -- неубывающая
функция, $f$ -- локально интегрируемая вектор-функция $n$
вещественных переменных $x_1,\ldots,x_n,$ $f=(f_1,\ldots,f_n),$
$f_i\in W_{loc}^{1,1},$ $i=1,\ldots,n.$ Будем говорить, что
$f:D\rightarrow {\Bbb R}^n$ принадлежит классу
$W^{1,\varphi}_{loc},$ пишем $f\in W^{1,\varphi}_{loc},$ если
$\int\limits_{G}\varphi\left(|\nabla f(x)|\right)\,dm(x)<\infty$ для
любой компактной подобласти $G\subset D,$ где $|\nabla
f(x)|=\sqrt{\sum\limits_{i=1}^n\sum\limits_{j=1}^n\left(\frac{\partial
f_i}{\partial x_j}\right)^2}.$ Класс $W^{1,\varphi}_{loc}$
называется классом {\it Орлича--Соболева}. Согласно \cite[c.~232,
п.~I, $\S\,49,$ гл.~6]{Ku} область $D$ называется {\it локально
связной в точ\-ке} $x_0\in\partial D,$ если для любой окрестности
$U$ точки $x_0$ найдется окрестность $V\subset U$ точки $x_0$ такая,
что $V\cap D$ связно. Происхождение следующего термина играет важную
роль при исследовании граничного поведения пространственных
отображений, см., напр., \cite[разд.~3.8]{MRSY}. Будем говорить, что
граница $\partial D$ области $D$ {\it сильно достижима в точке
$x_0\in
\partial D$ относительно $p$-модуля}, если для любой окрестности $U$ точки $x_0$ найдется
компакт $E\subset D,$ окрестность $V\subset U$ точки $x_0$ и число
$\delta
>0$ такие, что
\begin{equation}\label{eq17***}
M_p(\Gamma(E,F, D))\ge \delta
\end{equation}
для любого континуума  $F$ в $D,$ пересекающего $\partial U$ и
$\partial V.$ (Здесь $M_p$ обозначает модуль семейств кривых, а
$\Gamma(E,F, D)$ обозначает семейство всех кривых, соединяющих
множества $E$ и $F$ в области $D,$ см., напр., \cite[разделы~2.2 и
2.5]{MRSY}). Граница области $D\subset{{\Bbb R}^n}$ называется {\it
сильно достижимой относительно $p$-модуля}, если указанное выше
свойство выполнено в каждой точке $x_0\in\partial D.$

\medskip
Справедливо следующее утверждение.

\medskip
\begin{theorem}\label{th1}
{\sl\, Пусть область $D$ локально связна в каждой точке своей
границы, $n\ge 3,$ отображение $f:D\rightarrow {\Bbb R}^n$ класса
$W_{loc}^{1, \varphi}(D)$ с конечным искажением является
ограниченным, открытым, дискретным и замкнутым, а граница области
$D^{\,\prime}=f(D)$ является сильно достижимой относительно
$\alpha$-модуля, $n-1<\alpha\leqslant n.$ Тогда $f$ имеет
непрерывное продолжение в точку $x_0\in \partial D,$ если
\begin{equation}\label{eqOS3.0a}
\int\limits_{1}^{\infty}\left(\frac{t}{\varphi(t)}\right)^
{\frac{1}{n-2}}dt<\infty
\end{equation}
и, кроме того, найдётся измеримая по Лебегу функция $Q:D\rightarrow
[0, \infty],$ такая что $K_{I,\alpha}(x, f)\leqslant Q(x)$ при почти
всех $x\in D,$ некотором $\varepsilon_0>0$ и всех $\varepsilon\in
(0, \varepsilon_0)$ выполнены следующие условия:
\begin{equation}\label{eq9}
\int\limits_{\varepsilon}^{\varepsilon_0}
\frac{dt}{t^{\frac{n-1}{\alpha-1}}q_{x_0}^{\,\frac{1}{\alpha-1}}(t)}<\infty\,,\qquad
\int\limits_{0}^{\varepsilon_0}
\frac{dt}{t^{\frac{n-1}{\alpha-1}}q_{x_0}^{\,\frac{1}{\alpha-1}}(t)}=\infty\,.
\end{equation}
Здесь
$q_{x_0}(r):=\frac{1}{\omega_{n-1}r^{n-1}}\int\limits_{|x-x_0|=r}Q(x)\,d{\mathcal
H}^{n-1}$ обозначает среднее интегральное значение функции $Q$ над
сферой $S(x_0, r).$ В частности, заключение теоремы \ref{th1}
является верным, если $q_{x_0}(r)=\,O\left({\left(
\log{\frac{1}{r}}\right)}^{n-1}\right)$ при $r\rightarrow 0.$}
\end{theorem}

\medskip
\begin{remark}\label{rem2}
Условие (\ref{eqOS3.0a}) принадлежит Кальдерону и использовалось им
для решения задач несколько иного плана (см. \cite{Cal}).
\end{remark}

\medskip
{\bf 2. Вспомогательные сведения, основные леммы и доказательство
теоремы \ref{th1}.}  Доказательство основного результата статьи
опирается на некоторый аппарат, суть которого излагается ниже (см.,
напр., \cite{MRSY}). Напомним некоторые определения, связанные с
понятием поверхности, интеграла по поверхности, а также модулей
семейств кривых и поверхностей.

\medskip Пусть $\omega$ -- открытое множество в $\overline{{\Bbb
R}^k}:={\Bbb R}^k\cup\{\infty\},$ $k=1,\ldots,n-1.$ Непрерывное
отображение $S:\omega\rightarrow{\Bbb R}^n$ будем называть {\it
$k$-мерной поверхностью} $S$ в ${\Bbb R}^n.$ Число прообразов
$N(y, S)={\rm card}\,S^{-1}(y)={\rm card}\,\{x\in\omega:S(x)=y\},\
y\in{\Bbb R}^n$ будем называть {\it функцией кратности} поверхности
$S.$ Другими словами, $N(y, S)$ -- кратность накрытия точки $y$
поверхностью $S.$ Пусть $\rho:{\Bbb R}^n\rightarrow\overline{{\Bbb
R}^+}$ -- борелевская функция, в таком случае интеграл от функции
$\rho$ по поверхности $S$ определяется равенством:  $\int\limits_S
\rho\,d{\mathcal{A}}:=\int\limits_{{\Bbb R}^n}\rho(y)\,N(y,
S)\,d{\mathcal H}^ky.$
Пусть $\Gamma$ -- семейство $k$-мерных поверхностей $S.$ Борелевскую
функцию $\rho:{\Bbb R}^n\rightarrow\overline{{\Bbb R}^+}$ будем
называть {\it допустимой} для семейства $\Gamma,$ сокр. $\rho\in{\rm
adm}\,\Gamma,$ если
\begin{equation}\label{eq8.2.6}\int\limits_S\rho^k\,d{\mathcal{A}}\geqslant 1\end{equation}
для каждой поверхности $S\in\Gamma.$ Пусть $p\geqslant 1,$ тогда
{\it $p$-модулем} семейства $\Gamma$ назовём величину
$$M_p(\Gamma)=\inf\limits_{\rho\in{\rm adm}\,\Gamma}
\int\limits_{{\Bbb R}^n}\rho^p(x)\,dm(x)\,.$$ Заметим, что
$p$-модуль семейств поверхностей, определённый таким образом,
представляет собой внешнюю меру в пространстве всех $k$-мер\-ных
поверхностей (см. \cite{Fu}). Говорят, что некоторое свойство $P$
выполнено для {\it $p$-почти всех поверхностей} области $D,$ если
оно имеет место для всех поверхностей, лежащих в $D,$ кроме, быть
может, некоторого их подсемейства, $p$-модуль которого равен нулю.
При $p=n$ приставка <<$p$->> в словах <<$p$-почти всех...>>, как
правило, опускается. В частности, говорят, что некоторое свойство
выполнено для {\it $p$-почти всех кривых} области $D$, если оно
имеет место для всех кривых, лежащих в $D$, кроме, быть может,
некоторого их подсемейства, $p$-модуль которого равен нулю.

Будем говорить, что измеримая по Лебегу функция $\rho:{\Bbb
R}^n\rightarrow\overline{{\Bbb R}^+}$ {\it $p$-обобщённо допустима}
для семейства $\Gamma$ $k$-мерных поверхностей $S$ в ${\Bbb R}^n,$
сокр. $\rho\in{\rm ext}_p\,{\rm adm}\,\Gamma,$ если соотношение
(\ref{eq8.2.6}) выполнено для $p$-почти всех поверхностей $S$
семейства $\Gamma.$ {\it Обобщённый $p$-модуль} $\overline
M_p(\Gamma)$ семейства $\Gamma$ определяется равенством
$$\overline{M_p}(\Gamma)= \inf\int\limits_{{\Bbb
R}^n}\rho^p(x)\,dm(x)\,,$$
где точная нижняя грань берётся по всем функциям $\rho\in{\rm
ext}_p\,{\rm adm}\,\Gamma.$ Очевидно, что при каждом
$p\in(0,\infty),$ $k=1,\ldots,n-1,$ и каждого семейства $k$-мерных
поверхностей $\Gamma$ в ${\Bbb R}^n,$ выполнено равенство
$\overline{M_p}(\Gamma)=M_p(\Gamma).$

Следующий класс отображений представляет собой обобщение
квазиконформных отображений в смысле кольцевого определения по
Герингу (\cite{Ge$_3$}) и отдельно исследуется (см., напр.,
\cite[глава~9]{MRSY}). Пусть $p\geqslant 1,$ $D$ и $D^{\,\prime}$ --
заданные области в $\overline{{\Bbb R}^n},$ $n\geqslant 2,$
$x_0\in\overline{D}\setminus\{\infty\}$ и $Q:D\rightarrow(0,\infty)$
-- измеримая по Лебегу функция. Будем говорить, что $f:D\rightarrow
D^{\,\prime}$ -- {\it нижнее $Q$-отображение в точке $x_0$
относительно $p$-модуля,} как только
\begin{equation}\label{eq1A}
M_p(f(\Sigma_{\varepsilon}))\geqslant \inf\limits_{\rho\in{\rm
ext}_p\,{\rm adm}\,\Sigma_{\varepsilon}}\int\limits_{D\cap
A(\varepsilon, r_0, x_0)}\frac{\rho^p(x)}{Q(x)}\,dm(x)
\end{equation}
для каждого кольца $A(x_0, \varepsilon, r_0)=\{x\in {\Bbb R}^n:
\varepsilon<|x-x_0|<r_0\},$ $r_0\in(0, d_0),$ $d_0=\sup\limits_{x\in
D}|x-x_0|,$
где $\Sigma_{\varepsilon}$ обозначает семейство всех пересечений
сфер $S(x_0, r)$ с областью $D,$ $r\in (\varepsilon, r_0).$ Примеры
таких отображений несложно указать (см. теорему \ref{thOS4.1} ниже).

Отметим, что выражения <<почти всех кривых>> и <<почти всех
по\-вер\-х\-но\-с\-тей>> в отдельных случаях могут иметь две
различные интерпретации (в частности, если речь идёт о семействе
сфер, то <<почти всех>> может пониматься как относительно множества
значений $r,$ так и $p$-модуля семейства сфер, рассматриваемого как
частный случай семейства поверхностей). Следующее утверждение вносит
некоторую ясность между указанными интерпретациями и может быть
установлено полностью по аналогии с \cite[лемма~9.1]{MRSY}.

\medskip
\begin{lemma}\label{lemma8.2.11}{\, Пусть $p\geqslant 1,$ $x_0\in D.$ Если некоторое
свойство $P$ имеет место для $p$-почти всех сфер $D(x_0, r):=S(x_0,
r)\cap D,$ где <<почти всех>> понимается в смысле модуля семейств
поверхностей, то $P$ также имеет место для почти всех сфер $D(x_0,
r)$ относительно линейной меры Лебега по параметру $r\in {\Bbb R }.$
Обратно, пусть $P$ имеет место для почти всех сфер $D(x_0,
r):=S(x_0, r)\cap D$ относительно линейной меры Лебега по $r\in
{\Bbb R},$ тогда $P$ также имеет место для $p$-почти всех
поверхностей $D(x_0, r):=S(x_0, r)\cap D$ в смысле модуля семейств
поверхностей.}\end{lemma}

\medskip
Следующее утверждение облегчает проверку бесконечной серии
неравенств в (\ref{eq1A}) и может быть установлено аналогично
доказательству \cite[теорема~9.2]{MRSY} (см. также
\cite[теорема~6.1]{GS}).

\medskip
\begin{lemma}\label{lemma4}{\,
Пусть $D,$  $D^{\,\prime}\subset\overline{{\Bbb R}^n},$
$x_0\in\overline{D}\setminus\{\infty\}$ и $Q:D\rightarrow(0,\infty)$
-- измеримая по Лебегу функция. Отображение $f:D\rightarrow
D^{\,\prime}$ является нижним $Q$-отображением относительно
$p$-модуля в точке $x_0,$ $p>n-1,$ тогда и только тогда, когда
%
$M_p(f(\Sigma_{\varepsilon}))\geqslant\int\limits_{\varepsilon}^{r_0}
\frac{dr}{||\,Q||_{s}(r)}\quad\forall\ \varepsilon\in(0,r_0)\,,\
r_0\in(0,d_0),$ $d_0=\sup\limits_{x\in D}|x-x_0|,$
%
$s=\frac{n-1}{p-n+1},$ где, как и выше, $\Sigma_{\varepsilon}$
обозначает семейство всех пересечений сфер $S(x_0, r)$ с областью
$D,$ $r\in (\varepsilon, r_0),$
$ \Vert
Q\Vert_{s}(r)=\left(\int\limits_{D(x_0,r)}Q^{s}(x)\,d{\mathcal{A}}\right)^{\frac{1}{s}}$
-- $L_{s}$-норма функции $Q$ над сферой $D(x_0,r)=\{x\in D:
|x-x_0|=r\}=D\cap S(x_0,r)$.}
\end{lemma}

\medskip
Пусть $G$ -- открытое множество в ${\Bbb R}^n$ и $I=\{x\in{\Bbb
R}^n:a_i<x_i<b_i,i=1,\ldots,n\}$ -- открытый $n$-мерный интервал.
Отображение $f:I\rightarrow{\Bbb R}^n$ {\it принадлежит классу
$ACL$} ({\it абсолютно непрерывно на линиях}), если $f$ абсолютно
непрерывно на почти всех линейных сегментах в $I,$ параллельных
координатным осям. Отображение $f:G\rightarrow{\Bbb R}^n$ {\it
принадлежит классу $ACL$} в $G,$ когда сужение $f|_I$ принадлежит
классу $ACL$ для каждого интервала $I,$ $\overline{I}\subset G.$

\medskip
Напомним, что {\it конденсатором} называют пару
$E=\left(A,\,C\right),$ где $A$ -- открытое множество в ${\Bbb
R}^n,$ а $C$ -- компактное подмножество $A.$ {\it Ёмкостью}
конденсатора $E$ порядка $p\geqslant 1$ называется следующая
величина:
%
%
${\rm cap}_p\,E={\rm cap}_p\,\left(A,\,C\right)= \inf\limits_{u\in
W_0(E)}\,\,\int\limits_A |\nabla u(x)|^p\,\,dm(x),$
%
где $W_0(E)=W_0\left(A,\,C\right)$ -- семейство неотрицательных
непрерывных функций $u:A\rightarrow{\Bbb R}$ с компактным носителем
в $A,$ таких что $u(x)\geqslant 1$ при $x\in C$ и $u\in ACL.$
Здесь, как обычно, $|\nabla
u|={\left(\sum\limits_{i=1}^n\,{\left(\partial_i u\right)}^2
\right)}^{1/2}.$

\medskip
Следующие важные сведения, касающиеся ёмкости пары множеств
относительно области, могут быть найдены в работе В.~Цимера
\cite{Zi}. Пусть $G$ -- ограниченная область в ${\Bbb R}^n$ и $C_{0}
, C_{1}$ -- непересекающиеся компактные множества, лежащие в
замыкании $G.$ Полагаем  $R=G \setminus (C_{0} \cup C_{1})$ и
$R^{\,*}=R \cup C_{0}\cup C_{1},$ тогда {\it $p$-ёмкостью пары
$C_{0}, C_{1}$ относительно замыкания $G$} называется величина
$C_p[G, C_{0}, C_{1}] = \inf \int\limits_{R} \vert \nabla u
\vert^{p}\ dm(x),$
где точная нижняя грань берётся по всем функциям $u,$ непрерывным в
$R^{\,*},$ $u\in ACL(R),$ таким что $u=1$ на $C_{1}$ и $u=0$ на
$C_{0}.$ Указанные функции будем называть {\it допустимыми} для
величины $C_p [G, C_{0}, C_{1}].$ Мы будем говорить, что  {\it
множество $\sigma \subset {\Bbb R}^n$ разделяет $C_{0}$ и $C_{1}$ в
$R^{\,*}$}, если $\sigma \cap R$ замкнуто в $R$ и найдутся
непересекающиеся множества $A$ и $B,$ являющиеся открытыми в
$R^{\,*} \setminus \sigma,$ такие что $R^{\,*} \setminus \sigma =
A\cup B,$ $C_{0}\subset A$ и $C_{1} \subset B.$ Пусть $\Sigma$
обозначает класс всех множеств, разделяющих $C_{0}$ и $C_{1}$ в
$R^{\,*}.$ Для числа $p^{\prime} = p/(p-1)$ определим величину
%
$$\widetilde{M}_{p^{\prime}}(\Sigma)=\inf\limits_{\rho\in
\widetilde{\rm adm} \Sigma} \int\limits_{{\Bbb
R}^n}\rho^{\,p^{\prime}}dm(x)\,,$$
%
где запись $\rho\in \widetilde{\rm adm}\,\Sigma$ означает, что
$\rho$ -- неотрицательная борелевская функция в ${\Bbb R}^n$ такая,
что
\begin{equation} \label{eq13.4.13}
\int\limits_{\sigma \cap R}\rho d{\mathcal H}^{n-1} \geqslant
1\quad\forall\, \sigma \in \Sigma\,.
\end{equation}
Заметим, что согласно результата Цимера
\begin{equation}\label{eq3}
\widetilde{M}_{p^{\,\prime}}(\Sigma)=C_p[G , C_{0} ,
C_{1}]^{\,-1/(p-1)}\,,
\end{equation}
см. \cite[теорема~3.13]{Zi} при $p=n$ и \cite[с.~50]{Zi$_1$} при
$1<p<\infty.$ Заметим также, что согласно результата Шлык
\begin{equation}\label{eq4}
M_p(\Gamma(E, F, D))= C_p[D, E, F]\,,
\end{equation}
см. \cite[теорема~1]{Shl}.

\medskip
Напомним, что отображение $f:X\rightarrow Y$ между пространствами с
мерами $(X, \Sigma, \mu)$ и $(Y, \Sigma^{\,\prime}, \mu^{\,\prime})$
обладает {\it $N$-свой\-с\-т\-вом} (Лузина), если из условия
$\mu(S)=0$ следует, что $\mu^{\,\prime}(f(S))=0.$ Следующее
вспомогательное утверждение получено в работе \cite{KRSS} (см.
теорема 1 и следствие 2).

\medskip
\begin{proposition}\label{pr1}
{\, Пусть $D$ -- область в ${\Bbb R}^n,$ $n\geqslant 3,$
$\varphi:(0,\infty)\rightarrow (0,\infty)$ -- неубывающая функция,
удовлетворяющая условию (\ref{eqOS3.0a}). Тогда:

1) Если $f:D\rightarrow{\Bbb R}^n$ -- непрерывное открытое
отображение класса $W^{1,\varphi}_{loc}(D),$ то $f$ имеет почти
всюду полный дифференциал в $D;$

2) Любое непрерывное отображение $f\in W^{1,\varphi}_{loc}$ обладает
$N$-свойством относительно $(n-1)$-мерной меры Хаусдорфа, более
того, локально абсолютно непрерывно на почти всех сферах $S(x_0, r)$
с центром в заданной предписанной точке $x_0\in{\Bbb R}^n$. Кроме
того, на почти всех таких сферах $S(x_0, r)$ выполнено условие
${\mathcal H}^{n-1}(f(E))=0,$ как только $|\nabla f|=0$ на множестве
$E\subset S(x_0, r).$ (Здесь <<почти всех>> понимается относительно
линейной меры Лебега по параметру $r$).}

\end{proposition}

\medskip
Для отображения $f:D\,\rightarrow\,{\Bbb R}^n,$ множества $E\subset
D$ и $y\,\in\,{\Bbb R}^n,$  определим {\it функцию кратности $N(y,
f, E)$} как число прообразов точки $y$ во множестве $E,$ т.е.
\begin{equation}\label{eq1.7A}
N(y, f, E)\,=\,{\rm card}\,\left\{x\in E: f(x)=y\right\}\,,\quad
%
N(f, E)\,=\,\sup\limits_{y\in{\Bbb R}^n}\,N(y, f, E)\,.
\end{equation}
Обозначим через $J_{n-1}f_r(a)$ величину, означающую $(n-1)$-мерный
якобиан сужения отображения $f$ на сферу $S(x_0, r)\supset a$ в
точке $a$ (см. \cite[раздел~3.2.1]{Fe}). Предположим, что
отображение $f:D\rightarrow {\Bbb R}^n$ дифференцируемо в точке
$x_0\in D$ и матрица Якоби $f^{\,\prime}(x_0)$ невырождена, $J(x_0,
f)={\rm det\,}f^{\,\prime}(x_0)\ne 0.$ Тогда найдутся системы
векторов $e_1,\ldots, e_n$ и
$\widetilde{e_1},\ldots,\widetilde{e_n}$ и положительные числа
$\lambda_1(x_0),\ldots,\lambda_n(x_0),$
$\lambda_1(x_0)\leqslant\ldots\leqslant\lambda_n(x_0),$ такие что
$f^{\,\prime}(x_0)e_i=\lambda_i(x_0)\widetilde{e_i}$ (см.
\cite[теорема~2.1 гл. I]{Re}), при этом,
\begin{equation}\label{eq11C}
|J(x_0, f)|=\lambda_1(x_0)\ldots\lambda_n(x_0),\quad \Vert
f^{\,\prime}(x_0)\Vert =\lambda_n(x_0)\,, \quad
l(f^{\,\prime}(x))=\lambda_1(x_0)\,,\end{equation}
\begin{equation}\label{eq41}
K_{I, p}(x_0,
f)=\frac{\lambda_1(x_0)\cdots\lambda_n(x_0)}{\lambda^p_1(x_0)}\,,
\end{equation}
см. \cite[соотношение~(2.5), разд.~2.1, гл.~I]{Re}. Числа
$\lambda_1(x_0),\ldots\lambda_n(x_0)$ называются {\it главными
значениями}, а вектора $e_1,\ldots, e_n$ и
$\widetilde{e_1},\ldots,\widetilde{e_n}$ -- {\it главными векторами
} отображения $f^{\,\prime}(x_0).$ Из геометрического смысла
$(n-1)$-мерного якобиана, а также первого соотношения в
(\ref{eq11C}) вытекает, что
\begin{equation}\label{eq10C}
\lambda_1(x_0)\cdots\lambda_{n-1}(x_0)\leqslant
J_{n-1}f_r(x_0)\leqslant \lambda_2(x_0)\cdots\lambda_n(x_0)\,,
\end{equation}
в частности, из (\ref{eq10C}) следует, что $J_{n-1}f_r(x_0)$
положителен во всех тех точках $x_0,$ где положителен якобиан
$J(x_0, f).$

\medskip
Следующие две леммы несут в себе основную смысловую нагрузку данной
заметки. Первое из них впервые установлено для случая гомеоморфизмов
в работе \cite{KR$_1$} (см. теорему 2.1).

\medskip
\begin{lemma}{}\label{thOS4.1} { Пусть $D$ -- область в ${\Bbb R}^n,$
$n\geqslant 2,$ $\varphi:(0,\infty)\rightarrow (0,\infty)$ --
неубывающая функция, удовлетворяющая условию (\ref{eqOS3.0a}).
Если $p>n-1,$ то каждое открытое дискретное отображение
$f:D\rightarrow {\Bbb R}^n$ с конечным искажением класса
$W^{1,\varphi}_{loc}$ такое, что $N(f, D)<\infty,$ является нижним
$Q$-отображением относительно $p$-модуля в каждой точке
$x_0\in\overline{D}$ при
$$Q(x)=N(f, D)\cdot K^{\frac{p-n+1}{n-1}}_{I, \alpha}(x, f),$$
$\alpha:=\frac{p}{p-n+1},$ где внутренняя дилатация $K_{I,\alpha}(x,
f)$ отображения $f$ в точке $x$ порядка $\alpha$ определена
соотношением (\ref{eq0.1.1A}), а кратность $N(f, D)$ определена
вторым соотношением в (\ref{eq1.7A}).}
\end{lemma}

\medskip
\begin{proof}
Заметим, что $f$ дифференцируемо почти всюду ввиду предложения
\ref{pr1}. Пусть $B$ -- борелево множество всех точек $x\in D,$ в
которых $f$ имеет полный дифференциал $f^{\,\prime}(x)$ и $J(x,
f)\ne 0.$ Применяя теорему Кирсбрауна и свойство единственности
аппроксимативного дифференциала (см. \cite[пункты~2.10.43 и
3.1.2]{Fe}), мы видим, что множество $B$ представляет собой не более
чем счётное объединение борелевских множеств $B_l,$
$l=1,2,\ldots\,,$ таких, что сужения $f_l=f|_{B_l}$ являются
билипшецевыми гомеоморфизмами (см., напр., \cite[пункты~3.2.2, 3.1.4
и 3.1.8]{Fe}). Без ограничения общности, мы можем полагать, что
множества $B_l$ попарно не пересекаются. Обозначим также символом
$B_*$ множество всех точек $x\in D,$ в которых $f$ имеет полный
дифференциал, однако, $f^{\,\prime}(x)=0.$

\medskip
Ввиду построения, множество $B_0:=D\setminus \left(B\bigcup
B_*\right)$ имеет лебегову меру нуль. Следовательно, по
\cite[теорема~9.1]{MRSY}, ${\mathcal H}^{n-1}(B_0\cap S_r)=0$ для
$p$-почти всех сфер $S_r:=S(x_0,r)$ с центром в точке
$x_0\in\overline{D},$ где <<$p$-почти всех>> следует понимать в
смысле $p$-модуля семейств поверхностей. По лемме \ref{lemma8.2.11}
также ${\mathcal H}^{n-1}(B_0\cap S_r)=0$ при почти всех $r\in {\Bbb
R}.$

\medskip
По предложению \ref{pr1} и из условия ${\mathcal H}^{n-1}(B_0\cap
S_r)=0$ для почти всех $r\in {\Bbb R}$ вытекает, что ${\mathcal
H}^{n-1}(f(B_0\cap S_r))=0$ для почти всех $r\in {\Bbb R}.$ По этому
предложению также ${\mathcal H}^{n-1}(f(B_*\cap S_r))=0,$ поскольку
$f$ -- отображение с конечным искажением и, значит, $\nabla f=0$
почти всюду, где $J(x, f)=0.$

\medskip
Пусть $\Gamma$ -- семейство всех пересечений сфер $S_r,$
$r\in(\varepsilon, r_0),$ $r_0<d_0=\sup\limits_{x\in D}\,|x-x_0|,$ с
областью $D$ (здесь $\varepsilon$ -- произвольное фиксированное
число из интервала $(0, r_0)$). Для заданной функции $\rho_*\in{\rm
adm}\,f(\Gamma),$ $\rho_*\equiv0$ вне $f(D),$ полагаем $\rho\equiv
0$ вне $B,$
$$\rho(x)\ \colon=\ \rho_*(f(x))\left(\frac{|J(x, f)|}{l(f^{\,\prime}(x))}
\right)^{\frac{1}{n-1}} \qquad\text{при}\ x\in B\,.$$
Учитывая соотношения (\ref{eq11C}) и (\ref{eq10C}),
\begin{equation}\label{eq12C}
\frac{|J(x, f)|}{l(f^{\,\prime}(x))} \geqslant J_{n-1}f_r(x)\,.
\end{equation}
Пусть $D_{r}^{\,*}\in f(\Gamma),$ $D_{r}^{\,*}=f(D\cap S_r).$
Заметим, что
$D_{r}^{\,*}=\bigcup\limits_{i=0}^{\infty} f(S_r\cap B_i)\bigcup
f(S_r\cap B_*)$
и, следовательно, для почти всех $r\in (\varepsilon, r_0)$
\begin{equation}\label{eq10B}
1\leqslant \int\limits_{D^{\,*}_r}\rho^{n-1}_*(y)d{\mathcal A_*}
\leqslant \sum\limits_{i=0}^{\infty} \int \limits_{f(S_r\cap B_i)}
\rho^{n-1}_*(y)N (y, f, S_r\cap B_i)d{\mathcal H}^{n-1}y +
\end{equation}
$$+\int\limits_{f(S_r\cap B_*)} \rho^{n-1}_*(y)N (y, f, S_r\cap B_*
)d{\mathcal H}^{n-1}y\,.$$ Учитывая доказанное выше, из
(\ref{eq10B}) мы получаем, что
\begin{equation}\label{eq11B}
1\leqslant \int\limits_{D^{\,*}_r}\rho^{n-1}_*(y)d{\mathcal A_*}
\leqslant \sum\limits_{i=1}^{\infty} \int \limits_{f(S_r\cap B_i)}
\rho^{n-1}_*(y)N (y, f, S_r\cap B_i)d{\mathcal H}^{n-1}y
\end{equation}
для почти всех $r\in (\varepsilon, r_0).$
Рассуждая покусочно на $B_i,$ $i=1,2,\ldots,$ ввиду \cite[1.7.6 и
теорема~3.2.5]{Fe} и (\ref{eq12C}) мы получаем, что
$$\int\limits_{B_i\cap S_r}\rho^{n-1}\,d{\mathcal A}=
\int\limits_{B_i\cap S_r}\rho_*^{n-1}(f(x))\frac{|J(x,
f)|}{l(f^{\,\prime}(x))}\,d{\mathcal A}=$$
$$=\int\limits_{B_i\cap S_r}\rho_*^{n-1}(f(x))\cdot \frac{|J(x,
f)|}{l(f^{\,\prime}(x))J_{n-1}f_r(x)}\cdot J_{n-1}f_r(x)\,d{\mathcal
A}\geqslant $$
\begin{equation}\label{eq12B}
\geqslant\int\limits_{B_i\cap S_r}\rho_*^{n-1}(f(x))\cdot
J_{n-1}f_r(x)\,d{\mathcal A}=\int\limits_{f(B_i\cap
S_r)}\rho_{*}^{n-1}\,N(y, f, S_r\cap B_i)d{\mathcal H}^{n-1}y
\end{equation} для почти всех $r\in (\varepsilon, r_0).$
Из (\ref{eq11B}) и (\ref{eq12B}) вытекает, что
$\rho\in{\rm{ext\,adm}}\,\Gamma.$

Замена переменных на каждом $B_l,$ $l=1,2,\ldots\,,$ (см., напр.,
\cite[теорема~3.2.5]{Fe}) и свойство счётной аддитивности интеграла
приводят к оценке
$$\int\limits_{D}\frac{\rho^p(x)}{K^{\frac{p-n+1}{n-1}}_{I,
\alpha}(x, f)}\,dm(x)\leqslant \int\limits_{f(D)}N(f, D)\cdot
\rho^{\,p}_*(y)\, dm(y)\,,$$ $\alpha:=\frac{p}{p-n+1},$ что и
завершает доказательство.
\end{proof}

\medskip
\begin{remark}\label{rem1}
Заключение леммы \ref{thOS4.1} при $n=2$ остаётся справедливым для
классов Соболева $W_{loc}^{1, 1}$ при аналогичных условиях, за
исключением дополнительного условия Кальдерона (\ref{eqOS3.0a}).
Чтобы в этом убедиться, необходимо повторить доказательство этой
леммы при $n=2,$ где необходимо учесть наличие $N$-свойства
указанных отображений на почти всех окружностях, что обеспечивается
свойством $ACL$ для произвольных классов Соболева (см.
\cite[теорема~1, п.~1.1.3, $\S$~1.1, гл.~I]{Maz}).
\end{remark}

\medskip
\begin{lemma}\label{lem1}
{\sl\, Пусть область $D$ локально связна в каждой точке своей
границы, $n\ge 2,$ отображение $f:D\rightarrow {\Bbb R}^n$ является
ограниченным, открытым, дискретным, и замкнутым нижним
$Q$-отображением относительно $p$-модуля, $n-1<p\leqslant n,$ а
граница области $D^{\,\prime}=f(D)$ является сильно достижимой
относительно $\alpha$-модуля, $\alpha:=\frac{p}{p-n+1}.$ Тогда
отображение $f:D\rightarrow {\Bbb R}^n$ имеет непрерывное
продолжение в точку $b\in\partial D,$ если при некотором
$\varepsilon_0>0$ и всех $\varepsilon\in (0, \varepsilon_0)$
выполнены условия
\begin{equation}\label{eq9A}
\int\limits_{\varepsilon}^{\varepsilon_0}
\frac{dt}{t^{\frac{n-1}{\alpha-1}}\widetilde{q}_{b}^{\,\frac{1}{\alpha-1}}(t)}<\infty\,,\qquad
\int\limits_{0}^{\varepsilon_0}
\frac{dt}{t^{\frac{n-1}{\alpha-1}}\widetilde{q}_{b}^{\,\frac{1}{\alpha-1}}(t)}=\infty\,,
\end{equation}
где $\alpha=\frac{p}{p-n+1},$
$\widetilde{q}_{b}(r):=\frac{1}{\omega_{n-1}r^{n-1}}\int\limits_{|x-b|=r}Q^{\frac{n-1}{p-n+1}}(x)\,d{\mathcal
H}^{n-1}$ обозначает среднее интегральное значение функции
$Q^{\frac{n-1}{p-n+1}}(x)$ над сферой $S(b, r).$}
\end{lemma}

\medskip
\begin{proof}
Предположим противное. Тогда найдутся, по крайней мере, две
последовательности $x_i,$ $x_i^{\,\prime}\in D,$ $i=1,2,\ldots,$
такие, что $x_i\rightarrow b,$ $x_i^{\,\prime}\rightarrow b$ при
$i\rightarrow \infty,$ $f(x_i)\rightarrow y,$
$f(x_i^{\,\prime})\rightarrow y^{\,\prime}$ при $i\rightarrow
\infty$ и $y^{\,\prime}\ne y.$ По определению сильно достижимой
границы в точке $y\in \partial D^{\,\prime}$ относительно
$\alpha$-модуля, для окрестности $U$ этой точки, не содержащей точки
$y^{\,\prime},$ найдутся компакт $C_0^{\,\prime}\subset
D^{\,\prime},$ окрестность $V$ точки $y,$ $V\subset U,$ и число
$\delta>0$ такие, что
\begin{equation}\label{eq1}
M_{\alpha}(\Gamma(C_0^{\,\prime}, F, D^{\,\prime}))\ge \delta
>0\,,\quad \alpha:=p/(p-n+1)\,.
\end{equation}
для произвольного континуума $F,$ пересекающего $\partial U$ и
$\partial V.$ Поскольку область $D$ локально связна в точке $b,$
можно соединить точки $x_i$ и $x_i^{\,\prime}$ кривой $\gamma_i,$
лежащей в достаточно малой окрестности $V_i\cap D$ точки $b.$ Можно
также считать, что $\gamma_i\subset \overline{B(b, 2^{\,-i})}\cap
D.$ Поскольку $f(x_i)\in V$ и $f(x_i^{\,\prime})\in D\setminus
\overline{U}$ при всех достаточно больших $i\in {\Bbb N},$ найдётся
номер $i_0\in {\Bbb N},$ такой, что согласно (\ref{eq1})
\begin{equation}\label{eq2D}
M_{\alpha}(\Gamma(C_0^{\,\prime}, f(\gamma_i), D^{\,\prime}))\ge
\delta
>0
\end{equation}
при всех $i\ge i_0\in {\Bbb N}.$

\medskip
Заметим, что ввиду замкнутости отображения $f,$ при достаточно малых
$r>0$
\begin{equation}\label{eq2}
C_0^{\,\prime}\subset f(D)\setminus \overline{f(B(b, r)\cap D)}\,.
\end{equation}
Действительно, если предположить, что включение (\ref{eq2}) не
выполняется при сколь угодно малых $r>0,$ то нашлась бы
последовательность $r_i>0,$ $r_i\rightarrow 0$ при
$i\rightarrow\infty,$ и элементы $y_i\in \overline{f(B(b, r_i)\cap
D)}\cap C_0^{\,\prime}.$ Поскольку $C_0^{\,\prime}$ -- компакт в
$f(D),$ то можно считать, что $y_i\rightarrow y_0\in C_0^{\,\prime}
$ при $i\rightarrow\infty.$ Так как $y_i\in \overline{f(B(b,
r_i)\cap D)}\cap C_0^{\,\prime},$ то при каждом фиксированном $i\in
{\Bbb N}$ найдётся последовательность $y_{ik}\in f(B(b, r_i)\cap D)
$ такая, что $y_{ik}\rightarrow y_i$ при $k\rightarrow\infty.$
Заметим, что $y_{ik}=f(x_{ik}),$ $x_{ik}\in B(b, r_i)\cap D.$

В силу сходимости $y_{1k}$ к $y_1,$ для числа $1/2$ отыщется номер
$k_1$ такой, что $|y_1-y_{1k_1}|<1/2.$ Аналогично, в силу сходимости
$y_{2k}$ к $y_2,$ для числа $1/4$ отыщется номер $k_2$ такой, что
$|y_2-y_{2k_2}|<1/4.$ Вообще, в силу сходимости $y_{mk}$ к $y_m$ для
числа $1/2^m$ отыщется номер $k_m$ такой, что
$|y_m-y_{mk_m}|<1/2^m.$ Но тогда, поскольку по построению
$y_i\rightarrow y_0$ при $i\rightarrow\infty,$ для любого
фиксированного $\varepsilon>0$ будем иметь
$$|y_0-y_{mk_m}|\leqslant |y_0-y_m|+|y_m-y_{mk_m}|\leqslant \varepsilon+1/2^m$$
при всех $m\geqslant M=M(\varepsilon),$ а это означает, что
$y_{mk_m}\rightarrow y_0$ при $m\rightarrow\infty.$ Но, с другой
стороны, $y_{mk_m}=f(x_{mk_m}),$ $x_{mk_m}\in B(b, r_m)\cap D,$
поэтому $y_0\in C(f, b),$ что противоречит замкнутости отображения
$f$ поскольку, с одной стороны, мы имеем, что $C(f,
\partial D)\subset \partial f(D)$ (см. \cite[теорема~3.3]{Vu$_1$}),
а с другой стороны, $y_0\in C(f, \partial D)$ и $y_0\in
C_0^{\,\prime},$ т.е., $y_0$ -- внутренняя точка области $f(D).$
Полученное противоречие указывает на справедливость включения
(\ref{eq2}). Выберем $\delta_0$ настолько малым, чтобы соотношение
(\ref{eq2}) имело место при всех $r\in (2^{\,-i}, \delta_0).$

\medskip
Рассмотрим семейство множеств $\Gamma_i:=\bigcup\limits_{r\in
(2^{\,-i}, \delta_0)}\{\partial f(B(b, r)\cap D)\cap f(D)\}.$
Заметим, что множество $\sigma_r:=\partial f(B(b, r)\cap D)\cap
f(D)$ замкнуто в $f(D).$ Кроме того, заметим, что $\sigma_r$
отделяет $f(\gamma_i)$ от $C_0^{\,\prime}$ в $f(D),$ поскольку
$$f(\gamma_i)\subset f(B(b, r)\cap D):=A,\quad
C_0^{\,\prime}\subset f(D)\setminus \overline{f(B(b, r)\cap
D)}:=B\,,$$
$A$ и $B$ открыты в $f(D)$ и
$$f(D)=A\cup \sigma_r\cup B\,.$$

\medskip
Пусть $\Sigma_i$ -- семейство всех множеств, отделяющих
$f(\gamma_i)$ от $C_0^{\,\prime}$ в $f(D).$ Поскольку $f$ --
открытое замкнутое отображение, мы получим:
\begin{equation}\label{eq7}
(\partial f(B(b, r)\cap D))\cap f(D)\subset f(S(b, r)\cap D), r>0.
\end{equation}
Действительно, пусть $y_0\in (\partial f(B(b, r)\cap D))\cap f(D),$
тогда найдётся последовательность $y_k\in f(B(b, r)\cap D)$ такая,
что $y_k\rightarrow y_0$ при $k\rightarrow \infty,$ где
$y_k=f(x_k),$ $x_k\in B(b, r)\cap D.$ Не ограничивая общности
рассуждений, можно считать, что $x_k\rightarrow x_0$ при
$k\rightarrow\infty.$ Заметим, что случай $x_0\in \partial D$
невозможен, поскольку в этом случае $y_0\in C(f, \partial D),$ что
противоречит замкнутости отображения $f.$ Тогда $x_0\in D.$ Возможны
две ситуации: 1) $x_0\in B(b , r)\cap D$ и 2) $x_0\in S(b , r)\cap
D.$ Заметим, что случай 1) невозможен, поскольку, в этом случае,
$f(x_0)=y_0$ и $y_0$ -- внутренняя точка множества $f(B(b, r)\cap
D),$ что противоречит выбору $y_0.$ Таким образом, включение
(\ref{eq7}) установлено.

\medskip
Здесь и далее объединения вида $\bigcup\limits_{r\in (r_1, r_2)}
\partial f(B(b, r)\cap D)\cap f(D)$ понимаются как семейство множеств. Пусть $\rho^{n-1}\in \widetilde{{\rm
adm}}\bigcup\limits_{r\in (2^{\,-i}, \delta_0)}
\partial f(B(b, r)\cap D)\cap f(D)$ в смысле соотношения (\ref{eq13.4.13}), тогда также
$\rho\in {\rm adm}\bigcup\limits_{r\in (2^{\,-i}, \delta_0)}
\partial f(B(b, r)\cap D)\cap f(D)$ в смысле соотношения (\ref{eq8.2.6}) при
$k=n-1.$ Ввиду (\ref{eq7}) мы получим, что $\rho\in {\rm
adm}\bigcup\limits_{r\in (2^{\,-i}, \delta_0)} f(S(b, r)\cap D)$ и,
следовательно, так как $\widetilde{M}_{q}(\Sigma_i)\geqslant
M_{q(n-1)}(\Sigma_i)$ при произвольном $q\geqslant 1,$ то
$$\widetilde{M}_{p/(n-1)}(\Sigma_i)\geqslant$$
$$\geqslant
\widetilde{M}_{p/(n-1)}\left(\bigcup\limits_{r\in (2^{\,-i},
\delta_0)}
\partial f(B(b, r)\cap D)\cap f(D)\right)\geqslant \widetilde{M}_{p/(n-1)}
\left(\bigcup\limits_{r\in (2^{\,-i}, \delta_0)} f(S(b, r)\cap
D)\right)\geqslant
$$
\begin{equation}\label{eq5}
\ge M_{p}\left(\bigcup\limits_{r\in (2^{\,-i}, \delta_0)} f(S(b,
r)\cap D)\right)\,.
\end{equation}
Однако, ввиду (\ref{eq3}) и (\ref{eq4}),
\begin{equation}\label{eq6}
\widetilde{M}_{p/(n-1)}(\Sigma_i)=\frac{1}{(M_{\alpha}(\Gamma(f(\gamma_i),
C_0^{\,\prime}, f(D))))^{1/(\alpha-1)}}\,.
\end{equation}
По лемме \ref{lemma4}
$$M_{p}\left(\bigcup\limits_{r\in (2^{\,-i}, \delta_0)} f(S(b,
r)\cap D)\right)\geqslant
$$
\begin{equation}\label{eq8}
\geqslant \int\limits_{2^{\,-i}}^{\delta_0}
\frac{dr}{\Vert\,Q\Vert_{s}(r)}= \int\limits_{2^{\,-i}}^{\delta_0}
\frac{dt}{\omega^{\frac{p-n+1}{n-1}}_{n-1}
t^{\frac{n-1}{\alpha-1}}\widetilde{q}_{b}^{\,\frac{1}{\alpha-1}}(t)}\quad\forall\,\,
i\in {\Bbb N}\,, s=\frac{n-1}{p-n+1}\,,\end{equation} где
$\Vert
Q\Vert_{s}(r)=\left(\int\limits_{D(b,r)}Q^{s}(x)\,d{\mathcal{A}}\right)^{\frac{1}{s}}$
-- $L_{s}$-норма функции $Q$ над сферой $S(b,r)\cap D.$
Отметим, что из соотношений (\ref{eq5})--(\ref{eq8}) вытекает, что
интеграл $\int\limits_{2^{\,-i}}^{\delta_0}
\frac{dt}{\omega^{\frac{p-n+1}{n-1}}_{n-1}
t^{\frac{n-1}{\alpha-1}}\widetilde{q}_{b}^{\,\frac{1}{\alpha-1}}(t)}$
сходится, но тогда из условия (\ref{eq9A}) вытекает, что
$\int\limits_{2^{\,-i}}^{\delta_0}
\frac{dt}{\omega^{\frac{p-n+1}{n-1}}_{n-1}
t^{\frac{n-1}{\alpha-1}}\widetilde{q}_{b}^{\,\frac{1}{\alpha-1}}(t)}\rightarrow\infty$
при $i\rightarrow\infty.$

Из соотношений (\ref{eq9A}) и (\ref{eq8}) вытекает, что
$\widetilde{M}_{p/(n-1)}(\Sigma_i)\rightarrow\infty$ при
$i\rightarrow\infty,$ однако, в таком случае, из (\ref{eq6})
следует, что $M_{\alpha}(\Gamma(f(\gamma_i), C_0^{\,\prime},
f(D)))\rightarrow 0$ при $i\rightarrow\infty,$ что противоречит
неравенству (\ref{eq2D}). Полученное противоречие опровергает
предположение о том, что $f$ не имеет непрерывного продолжения в
точку $b\in \partial D.$
\end{proof}~$\Box$

 \medskip
{\it Доказательство теоремы \ref{th1}} вытекает из лемм
\ref{thOS4.1} и \ref{lem1}, а также того факта, что максимальная
кратность $N(f, D)$ замкнутого открытого дискретного отображения $f$
конечна (см., напр., \cite[лемма~3.3]{MS}). $\Box$

\medskip
{\bf 3. Некоторые следствия и замечания.} Ещё один важный результат,
относящийся к устранению особенностей классов Орлича--Соболева,
касается функций конечного среднего колебания (см. \cite{MRSY} и
\cite{IR}).

\medskip
\medskip
В дальнейшем нам понадобится следующее вспомогательное утверждение
(см., напр., \cite[лемма~7.4, гл.~7]{MRSY} и
\cite[лемма~2.2]{RS$_1$}) при $p=n$ и \cite[лемма~2.2]{Sal} при
$p\ne n.$

\medskip
\begin{proposition}\label{pr1A}
{\, Пусть  $x_0 \in {\Bbb R}^n,$ $Q(x)$ -- измеримая по Лебегу
функция, $Q:{\Bbb R}^n\rightarrow [0, \infty],$ $Q\in
L_{loc}^1({\Bbb R}^n).$ Полагаем $A:=A(r_1,r_2,x_0)=\{ x\,\in\,{\Bbb
R}^n : r_1<|x-x_0|<r_2\}$ и
$\eta_0(r)=\frac{1}{Ir^{\frac{n-1}{p-1}}q_{x_0}^{\frac{1}{p-1}}(r)},$
где $I:=I=I(x_0,r_1,r_2)=\int\limits_{r_1}^{r_2}\
\frac{dr}{r^{\frac{n-1}{p-1}}q_{x_0}^{\frac{1}{p-1}}(r)}$ и
$q_{x_0}(r):=\frac{1}{\omega_{n-1}r^{n-1}}\int\limits_{|x-x_0|=r}Q(x)\,d{\mathcal
H}^{n-1}$ -- среднее интегральное значение функции $Q$ над сферой
$S(x_0, r).$ Тогда
\begin{equation}\label{eq10A}
\frac{\omega_{n-1}}{I^{p-1}}=\int\limits_{A} Q(x)\cdot
\eta_0^p(|x-x_0|)\ dm(x)\leqslant\int\limits_{A} Q(x)\cdot
\eta^p(|x-x_0|)\ dm(x)
\end{equation}
для любой измеримой по Лебегу функции $\eta :(r_1,r_2)\rightarrow
[0,\infty]$ такой, что
$\int\limits_{r_1}^{r_2}\eta(r)dr=1. $ }
\end{proposition}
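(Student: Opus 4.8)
The plan is to collapse the two $n$-dimensional integrals over the annulus $A$ to one-dimensional integrals in the radial variable, and then to read off the assertion of (\ref{eq10A}) as a sharp instance of H\"older's inequality whose extremal is precisely $\eta_0$. (Throughout I work in the range $p>1$ that is forced by the denominators $p-1$ appearing in the statement; this is automatic in the applications, where $p>n-1\ge 1$.) First I would record the radial reduction: for every nonnegative measurable $g:(r_1,r_2)\to[0,\infty]$, integration in spherical coordinates (equivalently, the coarea formula for the map $x\mapsto|x-x_0|$) together with the definition of $q_{x_0}$ gives
$$\int_A Q(x)\,g(|x-x_0|)\,dm(x)=\int_{r_1}^{r_2}\left(\int_{|x-x_0|=r}Q(x)\,d{\mathcal H}^{n-1}\right)g(r)\,dr=\omega_{n-1}\int_{r_1}^{r_2}r^{n-1}q_{x_0}(r)\,g(r)\,dr.$$
Here $Q\in L^1_{loc}$ guarantees $q_{x_0}(r)<\infty$ for a.e.\ $r$, so the right-hand side is meaningful. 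I abbreviate $w(r):=r^{n-1}q_{x_0}(r)$.

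For the equality on the left of (\ref{eq10A}), I would substitute $g=\eta_0^{\,p}$. From the definition of $\eta_0$ one has $\eta_0(r)=\tfrac1I\,w(r)^{-1/(p-1)}$, whence $w(r)\,\eta_0^{\,p}(r)=I^{-p}w(r)^{-1/(p-1)}=I^{-p}r^{-\frac{n-1}{p-1}}q_{x_0}^{-\frac{1}{p-1}}(r)$; the bookkeeping of exponents ($n-1-\tfrac{p(n-1)}{p-1}=-\tfrac{n-1}{p-1}$ and $1-\tfrac{p}{p-1}=-\tfrac{1}{p-1}$) is immediate. Integrating and invoking the definition of $I$ produces $\omega_{n-1}I^{-p}\cdot I=\omega_{n-1}/I^{p-1}$, which is the claimed value; en route this also verifies $\int_{r_1}^{r_2}\eta_0(r)\,dr=1$, so $\eta_0$ is itself an admissible competitor.

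For the inequality, I would bound the total mass of an arbitrary admissible $\eta$ by splitting $\eta=(\eta\,w^{1/p})\cdot w^{-1/p}$ and applying H\"older with exponents $p$ and $p'=p/(p-1)$:
$$1=\int_{r_1}^{r_2}\eta(r)\,dr\le\left(\int_{r_1}^{r_2}w(r)\,\eta^{\,p}(r)\,dr\right)^{1/p}\left(\int_{r_1}^{r_2}w(r)^{-1/(p-1)}\,dr\right)^{(p-1)/p}.$$
The second factor equals $I^{(p-1)/p}$ by the definition of $I$, so raising to the $p$-th power gives $\int_{r_1}^{r_2}w\,\eta^{\,p}\,dr\ge I^{-(p-1)}$; multiplying by $\omega_{n-1}$ and using the radial reduction yields $\int_A Q\,\eta^{\,p}\,dm\ge\omega_{n-1}/I^{p-1}$, as required. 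Equality in H\"older forces $\eta^{\,p}w$ proportional to $w^{-1/(p-1)}$, i.e.\ $\eta$ proportional to $w^{-1/(p-1)}$, which after normalization is exactly $\eta_0$; this confirms that the value $\omega_{n-1}/I^{p-1}$ is attained.

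The computations are routine, so the only genuine points demanding care are the measure-theoretic reduction and the degenerate cases. I would note that $Q\in L^1_{loc}$ (on the bounded annulus $A$) forces $w<\infty$, hence $w^{-1/(p-1)}>0$, on a set of full measure, so $I>0$ always; consequently the problematic situation $I=0$ never occurs. The remaining extreme is $I=\infty$ — which is exactly the case where $q_{x_0}$ vanishes on a set of positive radial measure — and there the right-hand side of (\ref{eq10A}) is $0$, so both the equality (read as $0=0$) and the inequality are trivial. The main obstacle, then, is simply the careful justification of the polar-coordinate identity (measurability of $q_{x_0}$ and validity of Fubini/coarea for the possibly $\{0,\infty\}$-valued weight), after which the sharp H\"older estimate closes the argument with no further difficulty.
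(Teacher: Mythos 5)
Your proof is correct. Note that the paper itself contains no proof of Proposition \ref{pr1A}: it is imported from the literature (\cite[Lemma~7.4, Ch.~7]{MRSY} and \cite[Lemma~2.2]{RS$_1$} for $p=n$, \cite[Lemma~2.2]{Sal} for $p\ne n$), and your argument --- the polar-coordinate reduction $\int_A Q\,g(|x-x_0|)\,dm=\omega_{n-1}\int_{r_1}^{r_2}r^{n-1}q_{x_0}(r)g(r)\,dr$ followed by the sharp H\"older estimate whose extremal is $\eta_0$ --- is precisely the standard proof given in those sources. Your handling of the degenerate cases is also sound ($Q\in L^1_{loc}$ on the bounded annulus forces $w=r^{n-1}q_{x_0}(r)<\infty$ a.e., hence $I>0$, while $I=\infty$ renders (\ref{eq10A}) trivial and simultaneously disposes of the $0\cdot\infty$ issue in the splitting $\eta=(\eta w^{1/p})w^{-1/p}$ when $q_{x_0}$ vanishes on a set of positive measure); the only quibble is your parenthetical claim that $I=\infty$ occurs ``exactly'' when $q_{x_0}$ vanishes on a set of positive radial measure, which is not needed and not quite accurate, since $I$ can diverge with $q_{x_0}>0$ a.e.
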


\medskip Будем говорить, что локально интегрируемая
функция ${\varphi}:D\rightarrow{\Bbb R}$ имеет {\it конечное среднее
колебание} в точке $x_0\in D$, пишем $\varphi\in FMO(x_0),$ если
%
%
%
%
$\limsup\limits_{\varepsilon\rightarrow
0}\frac{1}{\Omega_n\varepsilon^n}\int\limits_{B( x_0,\,\varepsilon)}
|{\varphi}(x)-\overline{{\varphi}}_{\varepsilon}|\, dm(x)<\infty,$
%
%
где
$\overline{{\varphi}}_{\varepsilon}=\frac{1}
{\Omega_n\varepsilon^n}\int\limits_{B(x_0,\,\varepsilon)}
{\varphi}(x)\, dm(x).$
\medskip
Заметим, что, как известно, $\Omega_n\varepsilon^n=m(B(x_0,
\varepsilon)).$ Имеет место следующая

\medskip
\begin{theorem}\label{th3}
{\sl\, Пусть область $D$ локально связна в каждой точке своей
границы, $n\ge 3,$ отображение $f:D\rightarrow {\Bbb R}^n$ класса
$W_{loc}^{1, \varphi}(D)$ с конечным искажением является
ограниченным, открытым, дискретным и замкнутым, а граница области
$D^{\,\prime}=f(D)$ является сильно достижимой относительно
$\alpha$-модуля, $n-1<\alpha\leqslant n.$ Тогда $f$ имеет
непрерывное продолжение в точку $x_0\in \partial D,$ если выполнено
условие (\ref{eqOS3.0a}) и, кроме того, найдётся функция $Q\in
L_{loc}^1(D),$ такая что $K_{I,\alpha}(x, f)\leqslant Q(x)$ при
почти всех $x\in D$ и $Q\in FMO (x_0).$ }
\end{theorem}

\medskip
\begin{proof}
Достаточно показать, что условие $Q\in FMO(x_0)$ влечёт расходимость
интеграла (\ref{eq9}), поскольку в этому случае необходимое
заключение будет следовать из теоремы \ref{th1}. Заметим, что для
функций класса $FMO$  в точке $x_0$
\begin{equation}\label{eq31*}
\int\limits_{\varepsilon<|x|<{e_0}}\frac{Q(x+x_0)\, dm(x)}
{\left(|x| \log \frac{1}{|x|}\right)^n} = O \left(\log\log
\frac{1}{\varepsilon}\right)
\end{equation}
при  $\varepsilon \rightarrow 0 $ и для некоторого $e_0>0,$ $e_0
\leqslant {\rm dist}\,\left(0,\partial D\right).$ При
$\varepsilon_0<r_0:={\rm dist}\,\left(0,\partial D\right)$ полагаем
$\psi(t):=\frac{1}{\left(t\,\log{\frac1t}\right)^{n/{\alpha}}},$
$I(\varepsilon,
\varepsilon_0):=\int\limits_{\varepsilon}^{\varepsilon_0}\psi(t) dt
\geqslant \log{\frac{\log{\frac{1}
{\varepsilon}}}{\log{\frac{1}{\varepsilon_0}}}}$ и
$\eta(t):=\psi(t)/I(\varepsilon, \varepsilon_0).$ Заметим, что
$\int\limits_{\varepsilon}^{\varepsilon_0}\eta(t)dt=1,$ кроме того,
из соотношения (\ref{eq31*}) вытекает, что
\begin{equation}\label{eq32*}
\frac{1}{I^{\alpha}(\varepsilon,
\varepsilon_0)}\int\limits_{\varepsilon<|x|<\varepsilon_0}
Q(x+x_0)\cdot\psi^{\alpha}(|x|)
 \ dm(x)\leqslant C\left(\log\log\frac{1}{\varepsilon}\right)^{1-{\alpha}}\rightarrow
 0
 \end{equation}
при $\varepsilon\rightarrow 0.$ Из соотношений (\ref{eq10A}) и
(\ref{eq32*}) вытекает, что интеграл вида (\ref{eq9}) расходится,
что и требовалось установить.
\end{proof} $\Box$

\medskip
Из леммы \ref{lem1} и замечания \ref{rem1} вытекает следующая

\medskip
\begin{theorem}\label{th2}
{\sl\, Пусть область $D$ локально связна в каждой точке своей
границы, отображение $f:D\rightarrow {\Bbb C}$ класса $W_{loc}^{1,
1}(D)$ с конечным искажением является ограниченным, открытым,
дискретным и замкнутым, а граница области $D^{\,\prime}=f(D)$
является сильно достижимой относительно $\alpha$-модуля,
$1<\alpha\leqslant 2.$ Тогда $f$ имеет непрерывное продолжение в
точку $x_0\in \partial D,$ если найдётся измеримая по Лебегу функция
$Q:D\rightarrow [0, \infty],$ такая что $K_{I,\alpha}(x, f)\leqslant
Q(x)$ при почти всех $x\in D$ и при некотором $\varepsilon_0>0,$
либо выполнено условие расходимости интеграла
%
$$\int\limits_{0}^{\varepsilon_0}
\frac{dt}{t^{\frac{1}{\alpha-1}}q_{x_0}^{\,\frac{1}{\alpha-1}}(t)}=\infty\,,$$
%
где $q_{x_0}(r):=\frac{1}{2\pi
}\int\limits_0^{2\pi}Q(x_0+re^{i\theta})\,d\theta,$ либо $Q\in
FMO(x_0).$}
\end{theorem}

{\bf 4. Связь нижних и кольцевых $Q$-отображений в граничных
точках.} Справедливо следующее утверждение.

\medskip
\begin{theorem}\label{th4}
{\sl\, Пусть $x_0\in \partial D,$ отображение $f:D\rightarrow {\Bbb
R}^n$ является ограниченным, открытым, дискретным, и замкнутым
нижним $Q$-отображением относительно $p$-модуля в области
$D\subset{\Bbb R}^n,$ $Q\in L_{loc}^1,$ $n-1<p\leqslant n,$ и
$\alpha:=\frac{p}{p-n+1}.$ Тогда для каждого
$\varepsilon_0<d_0:=\sup\limits_{x\in D}|x-x_0|$ и каждого компакта
$C_2\subset D\setminus B(x_0, \varepsilon_0)$ найдётся
$\varepsilon_1,$ $0<\varepsilon_1<\varepsilon_0,$ такое, что для
каждого $\varepsilon\in (0, \varepsilon_1)$ и каждого компакта
$C_1\subset \overline{B(x_0, \varepsilon)}\cap D$ выполнено
неравенство
\begin{equation}\label{eq3A}M_{\alpha}(f(\Gamma(C_1, C_2, D)))\leqslant \int\limits_{A(x_0,
\varepsilon, \varepsilon_1)}Q^{\frac{n-1}{p-n+1}}(x)
\eta^{\alpha}(|x-x_0|)dm(x)\,,
\end{equation}
где $A(x_0, \varepsilon, \varepsilon_1)=\{x\in {\Bbb R}^n:
\varepsilon<|x-x_0|<\varepsilon_1\}$ и $\eta: (\varepsilon,
\varepsilon_1)\rightarrow [0,\infty]$ -- произвольная измеримая по
Лебегу функция такая, что
\begin{equation}\label{eq6B}
\int\limits_{\varepsilon}^{\varepsilon_1}\eta(r)dr=1\,.
\end{equation}
 }
\end{theorem}

\begin{proof} Зафиксируем $\varepsilon_0$ такое, как в условии
теоремы. Пусть компакт $C_2$ удовлетворяет условию $C_2\subset
D\setminus B(x_0, \varepsilon_0).$ Тогда $K:=f(C_2)$ -- компакт в
$f(D).$ Пусть $K^{\,*}$ -- полный прообраз компакта $K$ при
отображении $f$ в $D.$ Поскольку по условию отображение $f$
замкнутое, множество $K^{\,*}$ является компактом в $D$ (см.
\cite[теорема~3.3]{Vu$_1$}), поэтому найдётся $\varepsilon_1$ такое,
что $K^{\,*}\cap \overline{B(x_0, \varepsilon_1)}=\varnothing.$

\medskip
Далее покажем, что
\begin{equation}\label{eq8A}
K\subset D^{\,\prime}\setminus \overline{f(B(x_0, r)\cap D)}\,, r\in
(0, \varepsilon_1)\,.
\end{equation}
Предположим противное, а именно, что найдётся $\zeta_0\in K\cap
\overline{f(B(x_0, r)\cap D)},$ $r\in (0, \varepsilon_1).$ Тогда
$\zeta_0=\lim\limits_{k\rightarrow\infty} \zeta_k,$ где $\zeta_k\in
f(B(x_0, r)\cap D).$ Отсюда $\zeta_k=f(\xi_k),$ $\xi_k\in B(x_0,
r)\cap D.$ Так как $\overline{B(x_0, r)\cap D}$ -- компакт, то из
последовательности $\xi_k$ можно выделить сходящуюся
подпоследовательность $\xi_{k_l}\rightarrow \xi_0\in
\overline{B(x_0, r)\cap D}.$ Случай $\xi_0\in \partial D$
невозможен, поскольку $f$ -- замкнутое отображение и, значит,
сохраняет границу: $C(f,
\partial D)\subset \partial f(D),$ но у нас $\zeta_0$ -- внутренняя
точка $D^{\,\prime}.$ Пусть $\xi_0$ -- внутренняя точка $D.$ По
непрерывности отображения $f$ имеем $f(\xi_0)=\zeta_0.$ Но тогда
одновременно $\xi_0\in B(x_0, \varepsilon_1)\cap D$ и $\xi_0\in
f^{\,-1}(K),$ что противоречит выбору $\varepsilon_1.$ Таким
образом, $K\cap \overline{f(B(x_0, r)\cap D)}=\varnothing$ при $r\in
(0, \varepsilon_1)$ и, значит, имеет место включение (\ref{eq8A}).

\medskip
Зафиксируем $\varepsilon\in (0, \varepsilon_1)$ и компакт
$C_1\subset \overline{B(x_0, \varepsilon)}\cap D.$ Рассмотрим
семейство множеств $\Gamma_{\varepsilon}:=\bigcup\limits_{r\in
(\varepsilon, \varepsilon_1)}\{\partial f(B(x_0, r)\cap D)\cap
f(D)\}.$ Заметим, что множество $\sigma_r:=\partial f(B(x_0, r)\cap
D)\cap f(D)$ замкнуто в $f(D).$ Кроме того, заметим, что $\sigma_r$
при $r\in (\varepsilon, \varepsilon_1)$ отделяет $f(C_1)$ от
$K=f(C_2)$ в $f(D),$ поскольку
$$f(C_1)\subset f(B(x_0, r)\cap D):=A,\quad
f(C_2)\subset f(D)\setminus \overline{f(B(x_0, r)\cap D)}:=B\,,$$
$A$ и $B$ открыты в $f(D)$ и
$$f(D)=A\cup \sigma_r\cup B\,.$$

\medskip
Пусть $\Sigma_{\varepsilon}$ -- семейство всех множеств, отделяющих
$f(C_1)$ от $f(C_2)$ в $f(D).$ Поскольку $f$ -- открытое замкнутое
отображение, мы получим, что
\begin{equation}\label{eq7A}
(\partial f(B(x_0, r)\cap D))\cap f(D)\subset f(S(x_0, r)\cap D),
r>0.
\end{equation}
Действительно, пусть $\zeta_0\in (\partial f(B(x_0, r)\cap D))\cap
f(D),$ тогда найдётся последовательность $\zeta_k\in f(B(x_0, r)\cap
D)$ такая, что $\zeta_k\rightarrow \zeta_0$ при $k\rightarrow
\infty,$ где $\zeta_k=f(\xi_k),$ $\xi_k\in B(x_0, r)\cap D.$ Не
ограничивая общности рассуждений, можно считать, что
$\xi_k\rightarrow \xi_0$ при $k\rightarrow\infty.$ Заметим, что
случай $\xi_0\in \partial D$ невозможен, поскольку в этом случае
$\zeta_0\in C(f, \partial D),$ что противоречит замкнутости
отображения $f.$ Тогда $\xi_0\in D.$ Возможны две ситуации: 1)
$\xi_0\in B(x_0 , r)\cap D$ и 2) $\xi_0\in S(x_0 , r)\cap D.$
Заметим, что случай 1) невозможен, поскольку, в этом случае,
$f(\xi_0)=\zeta_0$ и $\zeta_0$ -- внутренняя точка множества
$f(B(x_0, r)\cap D),$ что противоречит выбору $\zeta_0.$ Таким
образом, включение (\ref{eq7A}) установлено.

\medskip
Здесь и далее объединения вида $\bigcup\limits_{r\in (r_1, r_2)}
\partial f(B(x_0, r)\cap D)\cap f(D)$ понимаются как семейство множеств. Пусть $\rho^{n-1}\in \widetilde{{\rm
adm}}\bigcup\limits_{r\in (\varepsilon, \varepsilon_1)}
\partial f(B(x_0, r)\cap D)\cap f(D)$ в смысле соотношения (\ref{eq13.4.13}), тогда также
$\rho\in {\rm adm}\bigcup\limits_{r\in (\varepsilon, \varepsilon_1)}
\partial f(B(x_0, r)\cap D)\cap f(D)$ в смысле соотношения (\ref{eq8.2.6}) при
$k=n-1.$ Ввиду (\ref{eq7A}) мы получим, что $\rho\in {\rm
adm}\bigcup\limits_{r\in (\varepsilon, \varepsilon_1)} f(S(x_0,
r)\cap D)$ и, следовательно, так как
$\widetilde{M}_{q}(\Sigma_{\varepsilon})\geqslant
M_{q(n-1)}(\Sigma_{\varepsilon})$ при произвольном $q\geqslant 1,$
то
$$\widetilde{M}_{p/(n-1)}(\Sigma_{\varepsilon})\geqslant$$
$$\geqslant
\widetilde{M}_{p/(n-1)}\left(\bigcup\limits_{r\in (\varepsilon,
\varepsilon_1)}
\partial f(B(x_0, r)\cap D)\cap f(D)\right)\geqslant \widetilde{M}_{p/(n-1)}\left(\bigcup
\limits_{r\in (\varepsilon, \varepsilon_1)} f(S(x_0, r)\cap
D)\right)\geqslant
$$
\begin{equation}\label{eq5A}
\ge M_{p}\left(\bigcup\limits_{r\in (\varepsilon, \varepsilon_1)}
f(S(x_0, r)\cap D)\right)\,.
\end{equation}
Однако, ввиду (\ref{eq3}) и (\ref{eq4}),
\begin{equation}\label{eq6A}
\widetilde{M}_{p/(n-1)}(\Sigma_{\varepsilon})=\frac{1}{(M_{\alpha}(\Gamma(f(C_1),
f(C_2), f(D))))^{1/(\alpha-1)}}\,.
\end{equation}
По лемме \ref{lemma4}
$$M_{p}\left(\bigcup\limits_{r\in (\varepsilon, \varepsilon_1)} f(S(x_0,
r)\cap D)\right)\geqslant
$$
\begin{equation}\label{eq8B}
\geqslant \int\limits_{\varepsilon}^{\varepsilon_1}
\frac{dr}{\Vert\,Q\Vert_{s}(r)}=
\int\limits_{\varepsilon}^{\varepsilon_1}
\frac{dt}{\omega^{\frac{p-n+1}{n-1}}_{n-1}
t^{\frac{n-1}{\alpha-1}}\widetilde{q}_{x_0}^{\,\frac{1}{\alpha-1}}(t)}\quad\forall\,\,
i\in {\Bbb N}\,, s=\frac{n-1}{p-n+1}\,,\end{equation} где
$\Vert
Q\Vert_{s}(r)=\left(\int\limits_{D(x_0,r)}Q^{s}(x)\,d{\mathcal{A}}\right)^{\frac{1}{s}}$
-- $L_{s}$-норма функции $Q$ над сферой $S(x_0,r)\cap D.$ Тогда из
(\ref{eq5A})--(\ref{eq8B}) вытекает, что
\begin{equation}\label{eq9C}
M_{\alpha}(\Gamma(f(C_1), f(C_2), f(D)))\leqslant
\frac{\omega_{n-1}}{I^{\alpha-1}}\,,
\end{equation}
где $I=\int\limits_{\varepsilon}^{\varepsilon_1}\
\frac{dr}{r^{\frac{n-1}{\alpha-1}}q_{x_0}^{\frac{1}{\alpha-1}}(r)}.$
Заметим, что $f(\Gamma(C_1,C_2, D))\subset \Gamma(f(C_1), f(C_2),
f(D)),$ так что из (\ref{eq9C}) вытекает, что
\begin{equation}\label{eq10D}
M_{\alpha}(f(\Gamma(C_1,C_2, D)))\leqslant
\frac{\omega_{n-1}}{I^{\alpha-1}}\,.
\end{equation}
Завершает доказательство применение предложения \ref{pr1A}.~$\Box$
\end{proof}

\medskip
{\bf 5. Устранение изолированных особенностей классов
Орлича--Соболева.} В работе \cite{Sev$_1$} нами было показано, что
открытые дискретные замкнутые ограниченные отображения области
$D\setminus\{x_0\}$ классов $W_{loc}^{1, \varphi},$ удовлетворяющие
условиям вида (\ref{eqOS3.0a})-(\ref{eq9}) при $p=n$, продолжаются
по непрерывности в точку $x_0\in D,$ как только предельные множества
$C(f, x_0)$ и $C(f, \partial D)$ не пересекаются. Ниже этот
результат будет показан для случая, когда $p\in (n-1, n]$ и условие
$C(f, x_0)\cap C(f,
\partial D)=\varnothing,$ вообще говоря, может нарушаться.
Аналог следующего утверждения доказан, например, в
\cite[лемма~2.2]{GSS}.

\medskip
\begin{lemma}\label{lem3.1!} {\sl\, Пусть $x_0\in D,$
$f:D\setminus\{x_0\}\rightarrow {\Bbb R}^n,$ $n\geqslant 2,$
является открытым дискретным и замкнутым ограниченным
$Q$-отображением относительно $p$-модуля в точке $x_0\in D.$
Предположим, что найдётся $\varepsilon_0>0$ и измеримая по Лебегу
функция $\psi(t):(0, \varepsilon_0)\rightarrow [0,\infty]$ такие,
что при всех $\varepsilon\in (0, \varepsilon_0)$
\begin{equation} \label{eq5C}
0<I(\varepsilon,
\varepsilon_0):=\int\limits_{\varepsilon}^{\varepsilon_0}\psi(t)dt <
\infty\,, I(\varepsilon, \varepsilon_0)\rightarrow
\infty\quad\text{при}\quad\varepsilon\rightarrow 0
\end{equation}
и, кроме того, при $\varepsilon\rightarrow 0$
\begin{equation} \label{eq4A}
\int\limits_{\varepsilon<|x-x_0|<\varepsilon_0}Q^{\,s}(x)\cdot\psi^{\,\alpha}(|x-x_0|)
\ dm(x)\,=\,o\left(I^{\,\alpha}(\varepsilon,
\varepsilon_0)\right)\,, \alpha=\frac{p}{p-n+1}\,,
s=\frac{n-1}{p-n+1}\,.
\end{equation}

Если $\Gamma$ -- семейство всех открытых кривых
$\gamma(t):(0,1)\rightarrow D\setminus\{x_0\}$ таких, что
$\gamma(t_k)\rightarrow x_0$ при некоторой последовательности
$t_k\rightarrow 0,$ $\gamma(t)\not\equiv x_0,$ то
$M_{\alpha}\left(f(\Gamma)\right)=0.$}
\end{lemma}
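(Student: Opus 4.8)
The plan is to exhaust $\Gamma$ by subfamilies of paths that reach a definite distance from $x_0$, to minorize each such subfamily by an annulus-crossing family, and to bound the image modulus of the latter by Theorem \ref{th4}, whose right-hand side is precisely the integral controlled by hypothesis (\ref{eq4A}). First I would fix a sequence $r_j\downarrow 0$ with $r_1<\min(\varepsilon_0,\mathrm{dist}(x_0,\partial D))$, put $R(\gamma):=\sup_t|\gamma(t)-x_0|$ and $\Gamma^{(j)}:=\{\gamma\in\Gamma:R(\gamma)>r_j\}$. Every $\gamma\in\Gamma$ has range in $D\setminus\{x_0\}$ and is non-degenerate, so $R(\gamma)>0$ and $\gamma\in\Gamma^{(j)}$ as soon as $r_j<R(\gamma)$; hence $\Gamma=\bigcup_j\Gamma^{(j)}$, and by countable subadditivity of the modulus it is enough to show $M_\alpha(f(\Gamma^{(j)}))=0$ for each fixed $j$.

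Fix $j$. Since a path $\gamma\in\Gamma^{(j)}$ has points arbitrarily close to $x_0$ and a point at distance exceeding $r_j$, for every $\varepsilon\in(0,r_j)$ it contains, by connectedness, a subpath $\beta$ joining $S(x_0,\varepsilon)$ to $S(x_0,r_j)$ within the closed annulus. Taking $C_1=S(x_0,\varepsilon)$ and $C_2=S(x_0,r_j)$ --- both continua for $n\ge2$, contained respectively in $\overline{B(x_0,\varepsilon)}\cap D$ and in $D\setminus B(x_0,r_j)$ --- I get $\beta\in\Gamma(C_1,C_2,D)$, so $f(\Gamma^{(j)})$ is minorized by $f(\Gamma(C_1,C_2,D))$ and $M_\alpha(f(\Gamma^{(j)}))\le M_\alpha(f(\Gamma(C_1,C_2,D)))$. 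I would then invoke Theorem \ref{th4} with outer radius $r_j$ and the continuum $C_2$: there is a threshold $\varepsilon_1^{(j)}<r_j$ such that for all $\varepsilon\in(0,\varepsilon_1^{(j)})$ and every admissible $\eta$ with $\int_\varepsilon^{\varepsilon_1^{(j)}}\eta(r)\,dr=1$,
\[
M_\alpha(f(\Gamma(C_1,C_2,D)))\le\int_{A(x_0,\varepsilon,\varepsilon_1^{(j)})}Q^{\,s}(x)\,\eta^{\alpha}(|x-x_0|)\,dm(x),\qquad s=\tfrac{n-1}{p-n+1}.
\]

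Choosing $\eta=\psi/I(\varepsilon,\varepsilon_1^{(j)})$ converts the right-hand side into $I(\varepsilon,\varepsilon_1^{(j)})^{-\alpha}\int_{A(x_0,\varepsilon,\varepsilon_1^{(j)})}Q^{s}\psi^{\alpha}\,dm$. As $\varepsilon_1^{(j)}<\varepsilon_0$, this integral is at most $\int_{\varepsilon<|x-x_0|<\varepsilon_0}Q^{s}\psi^{\alpha}\,dm=o(I^{\alpha}(\varepsilon,\varepsilon_0))$ by (\ref{eq4A}); and since $I(\varepsilon,\varepsilon_1^{(j)})=I(\varepsilon,\varepsilon_0)-I(\varepsilon_1^{(j)},\varepsilon_0)$ differs from $I(\varepsilon,\varepsilon_0)$ by a finite constant, (\ref{eq5C}) yields $I(\varepsilon,\varepsilon_1^{(j)})/I(\varepsilon,\varepsilon_0)\to1$ as $\varepsilon\to0$. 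Therefore the bound tends to $0$ as $\varepsilon\to0$, while its left-hand side $M_\alpha(f(\Gamma^{(j)}))$ is independent of $\varepsilon$; hence $M_\alpha(f(\Gamma^{(j)}))=0$, and finally $M_\alpha(f(\Gamma))\le\sum_j M_\alpha(f(\Gamma^{(j)}))=0$.

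I expect the main obstacle to be the reduction from the ``all paths accumulating at $x_0$'' description of $\Gamma$ to the annulus-crossing families handled by Theorem \ref{th4}: one must verify that every accumulating, non-degenerate path genuinely contributes a crossing subpath of $A(x_0,\varepsilon,r_j)$, which is exactly what forces the exhaustion by the reach $R(\gamma)$ instead of a single application. A secondary point to check is the applicability of Theorem \ref{th4} in this punctured setting --- $x_0$ is taken as an isolated boundary point of $D\setminus\{x_0\}$, so near it the domain is a punctured ball and the estimate (\ref{eq3A}) follows from the ring $Q$-homeomorphism condition (\ref{eq1A}) at $x_0$.
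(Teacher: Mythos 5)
Your proposal is correct and follows essentially the same route as the paper: the authors likewise exhaust $\Gamma$ by subfamilies of paths reaching a fixed sphere $S(x_0,r_i)$, minorize each by the annulus-crossing family $\Gamma(S(x_0,\varepsilon),S(x_0,r_i),D\setminus\{x_0\})$, apply Theorem \ref{th4} with $\eta=\psi/I$, and let $\varepsilon\to0$ using (\ref{eq4A}) and subadditivity of the modulus. Your explicit remark that $I(\varepsilon,\varepsilon_1^{(j)})$ differs from $I(\varepsilon,\varepsilon_0)$ only by a constant is a detail the paper leaves implicit, but it is the same argument.
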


\medskip
\begin{proof}
Не ограничивая общности рассуждений, можно считать, что $x_0=0.$
Заметим, что
\begin{equation}\label{eq12*}
\Gamma > \bigcup\limits_{i=1}^\infty\,\, \Gamma_i\,,
\end{equation}
где $\Gamma_i$ -- семейство кривых $\alpha_i(t):(0,1)\rightarrow
{{\Bbb R}^n}$ таких, что $\alpha_i(1)\in
\{0<|x|=r_i<\varepsilon_0\},$ где $r_i$ -- некоторая
последовательность, такая что $r_i\rightarrow 0$ при $i\rightarrow
\infty$ и $\alpha_i(t_k)\rightarrow 0$ при $k\rightarrow\infty$ для
той же самой последовательности $t_k\rightarrow 0$ при
$k\rightarrow\infty.$ Зафиксируем $i\ge 1.$ Можно считать, что
$r_i<d_0:=\sup\limits_{x\in D\setminus\{0\}}|x|.$ Положим $C_2:=S(0,
r_i),$ заметим, что $C_2\subset D\setminus (B(0, r_i)\cup\{0\}).$
Тогда согласно теореме \ref{th4} найдётся $k_i\in (0, r_i)$ такое,
что для любого $\varepsilon\in (0, k_i)$ и любого компакта
$C_1\subset \overline{B(0, \varepsilon)}\setminus\{0\}$ выполнено
соотношение
\begin{equation}\label{eq4B}
M_{\alpha}(f(\Gamma(C_1, C_2, D\setminus\{0\})))\leqslant
\int\limits_{A(0, \varepsilon, k_i)}Q^{\frac{n-1}{p-n+1}}(x)
\eta^{\alpha}(|x|)dm(x)\,,
\end{equation}
где $\eta: (\varepsilon, k_i)\rightarrow [0,\infty]$ -- произвольная
измеримая по Лебегу функция такая, что
\begin{equation}\label{eq5D}
\int\limits_{\varepsilon}^{k_i}\eta(r)dr=1\,.
\end{equation}

Поскольку ввиду (\ref{eq5C}) мы имеем условие $I(\varepsilon,
\varepsilon_0)\rightarrow \infty$ при $\varepsilon\rightarrow 0,$
найдётся $l_i\in (0, k_i]$ такое, что $I(\varepsilon, k_i)>0$ при
всех $\varepsilon\in(0, l_i).$
Заметим, что при указанных $\varepsilon>0$ функция
$$\eta(t)=\left\{
\begin{array}{rr}
\psi(t)/I(\varepsilon, k_i), &   t\in (\varepsilon,
k_i),\\
0,  &  t\in {\Bbb R}\setminus (\varepsilon, k_i)
\end{array}
\right. $$ удовлетворяет условию нормировки вида (\ref{eq5D}) в
кольце
$A(0, \varepsilon, k_i)=\{x\in {\Bbb R}^n:\varepsilon<|x|< k_i \}.$
Положим $C_2=S(0, \varepsilon),$ тогда ввиду (\ref{eq4B}) мы
получим, что
$$M_{\alpha}\left(f\left(\Gamma\left(S(0, \varepsilon),\,S(0,
r_i),\,D\setminus\{0\})\right)\right)\right)\le$$
\begin{equation}\label{eq11*}
\le \int\limits_{A(0, \varepsilon, \varepsilon_0)} Q(x)\cdot
\eta^{\,\alpha}(|x|)\ dm(x)\,\leqslant {\frak F}_i(\varepsilon),
\end{equation}
где
${\frak F}_i(\varepsilon)=\,\frac{1}{\left(I(\varepsilon,
k_i)\right)^{\,\alpha}}\int\limits_{\varepsilon<|x|<\varepsilon_0}\,Q(x)\,\psi^{\,\alpha}(|x|)\,dm(x).$
Принимая во внимание (\ref{eq4A}), получим, что ${\frak
F}_i(\varepsilon)\rightarrow 0$ при $\varepsilon\rightarrow 0.$
Заметим, что при каждом $\varepsilon\in (0, k_i)$
\begin{equation}\label{eq5*C}
\Gamma_i>\Gamma\left(S(0, \varepsilon),\,S(0, r_i),
D\setminus\{0\}\right)\,.
\end{equation}
Таким образом, при каждом фиксированном $i=1,2,\ldots$  из
(\ref{eq11*}) и (\ref{eq5*C}) получаем, что
\begin{equation}\label{eq6*}
M_{\alpha}(f(\Gamma_i))\leqslant {\frak F}_i(\varepsilon)\rightarrow
0
\end{equation}
при $\varepsilon\rightarrow 0$ и каждом фиксированном $i\in {\Bbb
N}.$ Однако, левая часть неравенства (\ref{eq6*}) не зависит от
$\varepsilon$ и, следовательно, $M_{\alpha}(f(\Gamma_i))=0.$
Наконец, из (\ref{eq12*}) и свойства полуаддитивности
$\alpha$-модуля вытекает, что $M_{\alpha}(f(\Gamma))=0.$
\end{proof}

\medskip
Справедливо следующее утверждение.

\medskip
\begin{lemma}\label{lem2}
{\sl\, Пусть $x_0\in D,$ отображение $f:D\setminus\{x_0\}\rightarrow
{\Bbb R}^n$ является ограниченным, открытым, дискретным, и замкнутым
нижним $Q$-отображением относительно $p$-модуля, $n-1<p\leqslant n.$
Тогда отображение $f:D\setminus\{x_0\}\rightarrow {\Bbb R}^n$ имеет
непрерывное продолжение в точку $x_0,$ если при некотором
$\varepsilon_0<{\rm dist}\,(x_0, \partial D)$ и всех $\varepsilon\in
(0, \varepsilon_0)$
\begin{equation} \label{eq5B}
0<I(\varepsilon,
\varepsilon_0):=\int\limits_{\varepsilon}^{\varepsilon_0}\psi(t)dt <
\infty\,,\,\,\, I(\varepsilon, \varepsilon_0)\rightarrow
\infty\quad\text{при}\quad\varepsilon\rightarrow 0\,,
\end{equation}
и, кроме того, при $\varepsilon\rightarrow 0$
\begin{equation} \label{eq4*}
\int\limits_{\varepsilon<|x-x_0|<\varepsilon_0}Q^{\,s}(x)\cdot\psi^{\,\alpha}(|x-x_0|)
\ dm(x)\,=\,o\left(I^{\,\alpha}(\varepsilon,
\varepsilon_0)\right)\,, \alpha=\frac{p}{p-n+1}\,,
s=\frac{n-1}{p-n+1}\,.
\end{equation}
}
\end{lemma}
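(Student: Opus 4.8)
The plan is to argue by contradiction, reducing everything to the upper bound furnished by Theorem~\ref{th4} (the same estimate used in the proof of Lemma~\ref{lem3.1!}). Assume that $f$ has no continuous extension to $x_0$. Then the cluster set $C(f,x_0)$ contains at least two distinct points $y_1\neq y_2$ (should $\infty$ occur, a preliminary inversion reduces to the case of two finite points), so there are sequences $x_m\to x_0$ and $x_m^{\,\prime}\to x_0$ with $f(x_m)\to y_1$ and $f(x_m^{\,\prime})\to y_2$. After a translation I may assume $x_0=0$, and after passing to subsequences that $|x_m|<2^{\,-m}$, $|x_m^{\,\prime}|<2^{\,-m}$. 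Since $n\geqslant 2$, the punctured ball $B(0,2^{\,-m})\setminus\{0\}$ is path connected, so I join $x_m$ to $x_m^{\,\prime}$ by a path $\gamma_m$ whose trace $C_1^{\,m}:=|\gamma_m|$ is a continuum contained in $\overline{B(0,2^{\,-m})}\cap D$. Fix once and for all a continuum $C_2\subset D\setminus B(0,\varepsilon_0)$.

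\medskip
First I would produce the upper bound. Applying Theorem~\ref{th4} to the condensers $(C_1^{\,m},C_2)$ together with Proposition~\ref{pr1A}, in the form (\ref{eq9C}), and choosing the normalized weight $\eta(t)=\psi(t)/I(\varepsilon,\varepsilon_1)$, where $I(\varepsilon,\varepsilon_1)=\int_{\varepsilon}^{\varepsilon_1}\psi(t)\,dt$ is finite and positive by (\ref{eq5B}), I obtain for $\varepsilon=2^{\,-m}$
\[
M_{\alpha}\bigl(\Gamma(f(C_1^{\,m}),f(C_2),f(D))\bigr)\leqslant
\frac{1}{I^{\,\alpha}(2^{\,-m},\varepsilon_1)}\int\limits_{2^{\,-m}<|x|<\varepsilon_1}
Q^{\,s}(x)\,\psi^{\,\alpha}(|x|)\,dm(x)\,.
\]
Since $I(\varepsilon,\varepsilon_0)=I(\varepsilon,\varepsilon_1)+I(\varepsilon_1,\varepsilon_0)$ and both tend to $\infty$ as $\varepsilon\to 0$ by (\ref{eq5B}), the hypothesis (\ref{eq4*}) forces the right--hand side to tend to $0$; hence $M_{\alpha}(\Gamma(f(C_1^{\,m}),f(C_2),f(D)))\to 0$ as $m\to\infty$.

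\medskip
The main work is the matching uniform lower bound $M_{\alpha}(\Gamma(f(C_1^{\,m}),f(C_2),f(D)))\geqslant\delta>0$, which will contradict the previous paragraph. Here closedness enters exactly as in Theorem~\ref{th4}: by (\ref{eq8A}) the fixed continuum $f(C_2)$ is disjoint from $\overline{f(B(0,r)\cap D)}$ for small $r$, hence from the cluster set $C(f,0)$, so $d_0:=\mathrm{dist}(f(C_2),C(f,0))>0$. I then pick $\varepsilon_*<\min\{|y_1-y_2|/3,\,d_0/2\}$ and consider the fixed spherical ring $R_*=\{\,\varepsilon_*<|y-y_1|<2\varepsilon_*\,\}$. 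For all large $m$ one has $f(x_m)\in B(y_1,\varepsilon_*)$, $f(x_m^{\,\prime})\notin\overline{B(y_1,2\varepsilon_*)}$ and $f(C_2)\cap\overline{B(y_1,2\varepsilon_*)}=\varnothing$; since $f(C_1^{\,m})$ is a continuum with $\mathrm{diam}\,f(C_1^{\,m})\geqslant|y_1-y_2|/2$, it must cross $R_*$. I would then pass to the dual (separating) modulus $\widetilde{M}_{p/(n-1)}(\Sigma_m)$ and use the duality (\ref{eq3})--(\ref{eq4}): every surface separating $f(C_1^{\,m})$ from $f(C_2)$ in $f(D)$ must cut across $R_*$, so the standard radial density on $R_*$ is, up to a constant, admissible for $\Sigma_m$; equivalently, the $p$--capacity of the condenser $(f(C_1^{\,m}),f(C_2))$ in $f(D)$ dominates that of the fixed ring $R_*$. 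This gives $\widetilde{M}_{p/(n-1)}(\Sigma_m)\leqslant c$ with $c$ independent of $m$, whence, by (\ref{eq6A})-type duality, $M_{\alpha}(\Gamma(f(C_1^{\,m}),f(C_2),f(D)))\geqslant c^{-(\alpha-1)}=:\delta>0$.

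\medskip
Comparing the two estimates yields $0<\delta\leqslant M_{\alpha}(\Gamma(f(C_1^{\,m}),f(C_2),f(D)))\to 0$, a contradiction, so $f$ extends continuously to $x_0$. I expect the delicate point to be precisely the uniform lower bound of the third paragraph: one must guarantee that the image continua $f(C_1^{\,m})$ all cross one and the same ring $R_*$ — this is where the disjointness of $f(C_2)$ from the cluster set (via closedness and (\ref{eq8A})) and the diameter bound for $f(C_1^{\,m})$ are essential — and that the capacity estimate is insensitive to the possibly irregular shape of $f(D)$, which it is, since passing to a subdomain only increases the capacity of a condenser.
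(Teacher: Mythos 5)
Your contradiction setup and the upper bound are essentially the paper's: joining $x_m$ to $x_m^{\,\prime}$ by continua shrinking to $x_0$ and invoking Theorem~\ref{th4} with the weight $\eta=\psi/I$ to make the relevant image modulus tend to $0$ is exactly how the paper proceeds. The gap is the uniform lower bound in your third paragraph, and it is not a technicality. It is false that every surface separating $f(C_1^{\,m})$ from $f(C_2)$ in $f(D)$ must cut across the fixed ring $R_*$ with $(n-1)$-area bounded below: both continua have points outside $\overline{B(y_1,2\varepsilon_*)}$, so a separating surface may lie entirely outside that ball (e.g.\ a small disk cutting a thin neck of $f(D)$ far from $R_*$), or may meet $R_*$ only in a set of arbitrarily small ${\mathcal H}^{n-1}$-measure if $f(D)$ is pinched inside $R_*$; in either case the normalized radial density on $R_*$ violates (\ref{eq13.4.13}). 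Equivalently, $M_{\alpha}(\Gamma(E,F,G))=C_{\alpha}[G,E,F]$ by (\ref{eq4}) depends on the geometry of the ambient domain $G=f(D)$ and not only on the plates, and your closing remark that passing to a subdomain only increases the capacity is backwards for this two-plate condenser: shrinking $G$ shrinks the family of connecting curves and hence decreases $M_{\alpha}$. A uniform lower bound of the kind you want is precisely the strong accessibility hypothesis imposed on $f(D)$ in Lemma~\ref{lem1}; no such hypothesis is available in Lemma~\ref{lem2}, so this step cannot be repaired along the lines you indicate.

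The paper avoids the problem by working with the one-plate condenser $E_j=\left(D\setminus\{x_0\},C_j\right)$ instead of a pair of continua. The family $\Gamma_{E_j}$ of curves issuing from $C_j$ and escaping to the boundary of $D\setminus\{x_0\}$ is split into curves tending to the puncture, whose image family has zero $\alpha$-modulus by Lemma~\ref{lem3.1!} (this is why that lemma is proved first), and curves tending to $\partial D$, which are controlled by Theorem~\ref{th4}; together with \cite[Prop.~10.2, Ch.~II]{Ri} this gives ${\rm cap}_{\alpha}\,f(E_j)\rightarrow 0$. The matching lower bound ${\rm cap}_{\alpha}\left(f(D\setminus\{x_0\}),f(C_j)\right)\geqslant\delta>0$ then comes from \cite[Lemma~2.1]{GSS}, which requires only that ${\rm diam}\,f(C_j)\geqslant a$ and that $\overline{{\Bbb R}^n}\setminus f(D\setminus\{x_0\})$ contain a nondegenerate continuum (this is where closedness enters) --- data independent of the possibly irregular shape of $f(D)$ near the cluster set. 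You would need to replace your third paragraph by an estimate of this one-plate type for the argument to close.
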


\medskip
\begin{proof} Не ограничивая общности рассуждений, можно считать,
что $x_0=0.$ Предположим противное, а именно, что отображение $f$ не
может быть продолжено по непрерывности в точку $x_0=0.$ Тогда
найдутся две последовательности $x_j$ и $x_j^{\,\prime},$
принадлежащие $D\setminus\left\{0\right\},$ $x_j\rightarrow 0,\quad
x_j^{\,\prime}\rightarrow 0,$ такие, что
$|f(x_j)-f(x_j^{\,\prime})|\geqslant a>0$ для всех $j\in {\Bbb N}.$
Не ограничивая общности рассуждений, можно считать, что $x_j$ и
$x_j^{\,\prime}$ лежат внутри шара $B(0, \varepsilon_0),$ где
$\varepsilon_0$ -- из условия леммы. Полагаем
$r_j=\max{\left\{|x_j|,\,|x_j^{\,\prime}|\right\}}.$ Соединим точки
$x_j$ и $x_j^{\,\prime}$ замкнутой кривой, лежащей в $\overline{B(0,
r_j)}\setminus\left\{0\right\}.$ Обозначим эту кривую символом $C_j$
и рассмотрим конденсатор
$E_j=\left(D\setminus\left\{0\right\}\,,C_j\right)$ (понятие
конденсатора и $p$-ёмкости конденсатора см., напр., в \cite{GSS}).

В силу открытости и непрерывности отображения $f,$ пара $f(E_j)$
также является конденсатором. Для произвольного конденсатора
$E=(A,\,C)$ в ${\Bbb R}^n$ через $\Gamma_E$ будет обозначаться
семейство всех кривых вида $\gamma:[a,\,b)\rightarrow A$ таких, что
$\gamma(a)\in C$ и $|\gamma|\cap\left(A\setminus
F\right)\ne\varnothing$ для произвольного компакта $F\subset A.$
Рассмотрим семейства кривых $\Gamma_{E_j}$ и $\Gamma_{f(E_j)}.$
Пусть $\Gamma_j^{\,*}$ -- семейство всех максимальных под\-ня\-тий
семейства кривых $\Gamma_{f(E_j)}$ при отображении $f$ с началом в
$C_j,$ лежащих в $D\setminus\left\{0\right\}.$ Заметим, что
$\Gamma_j^{\,*}\subset \Gamma_{E_j}.$ Поскольку
$\Gamma_{f(E_j)}>f(\Gamma_j^{\,*}),$ мы получим:
\begin{equation}\label{eq32*!}
M_{\alpha}\left(\Gamma_{f(E_j)}\right)\leqslant
M_{\alpha}\left(f(\Gamma_j^{*})\right)\leqslant
M_{\alpha}\left(f(\Gamma_{E_j})\right)\,.
\end{equation}
Заметим, что семейство $\Gamma_{E_j}$ может быть разбито на два
подсемейства:
\begin{equation}\label{eq33*!}
\Gamma_{E_j}\,=\,\Gamma_{E_{j_1}}\cup \Gamma_{E_{j_2}}\,,
\end{equation}
где $\Gamma_{E_{j_1}}$ -- семейство всех кривых
$\alpha(t):[a,\,c)\rightarrow D\setminus\left\{0\right\}$ с началом
в $C_j,$ таких что найдётся $t_k\in [a,\,c):$
$\alpha(t_k)\rightarrow 0$ при $t_k\rightarrow c-0;$
$\Gamma_{E_{j_2}}$ -- семейство всех кривых
$\alpha(t):[a,\,c)\rightarrow D\setminus\left\{0\right\}$ с началом
в $C_j,$ таких что найдётся $t_k\in [a,\,c):$ ${\rm
dist}\left(\alpha(t_k),\partial D\right)\rightarrow 0$ при
$t_k\rightarrow c-0.$

В силу соотношений (\ref{eq32*!}) и (\ref{eq33*!}),
\begin{equation}\label{eq34*!}
M_{\alpha}\left(\Gamma_{f(E_j)}\right)\leqslant
M_{\alpha}(f(\Gamma_{E_{j_1}}))\,+\,M_{\alpha}(f(\Gamma_{E_{j_2}}))\,.
\end{equation}

По лемме \ref{lem3.1!} $M_{\alpha}(f(\Gamma_{E_{j_1}}))=0.$ Кроме
того, заметим, что $\Gamma_{E_{j_2}}>\Gamma(S(0, r_j), S(0,
\varepsilon_0), D\setminus\{0\}).$ Для континуума $C_2:=S(0,
\varepsilon_0)$ имеем, что $C_2\subset D\setminus (B(0,
\varepsilon)\cup\{0\}).$ Выберем $\varepsilon_1$ таким, что для
любого $\varepsilon\in (0, \varepsilon_1)$ и любого континуума
$C_1\subset \overline{B(0, \varepsilon)}$ имели бы место условия
(\ref{eq3A})--(\ref{eq6B}) (что возможно в силу теоремы \ref{th4}).
Ввиду второго условия в (\ref{eq5B}) мы получим, что $I(\varepsilon,
\varepsilon_1)>0$ при всех $\varepsilon\in (0, \varepsilon_2)$ и
некотором $\varepsilon_2\in (0, \varepsilon_1).$ Так как
$r_j\rightarrow 0$ при $j\rightarrow\infty,$ найдётся $n_0\in {\Bbb
N}$ такой, что $r_j<\varepsilon_2.$ При $j\geqslant n_0$ рассмотрим
семейство функций
$$ \eta_{j}(t)= \left\{
\begin{array}{rr}
\psi(t)/I(r_j, \varepsilon_1), &   t\in (r_j,\, \varepsilon_1),\\
0,  &  t\in {\Bbb R} \setminus (r_j,\, \varepsilon_1)
\end{array}
\right. .$$
Имеем:
$\int\limits_{r_j}^{\varepsilon_1}\,\eta_{j}(t) dt
=\,\frac{1}{I\left(r_j, \varepsilon_1\right)}\int\limits_{r_j}
^{\varepsilon_1}\,\psi(t)dt=1.$ Полагая $C_1:=S(0, r_j)$ из условий
(\ref{eq3A})--(\ref{eq6B}) и учитывая (\ref{eq34*!}), получаем, что
$$M_{\alpha}(f(\Gamma_{E_{j}}))\leqslant\,\frac{1}{{I(r_j,
\varepsilon_1)}^{\alpha}}\int\limits_{r_j<|x|<\varepsilon_1}\,Q(x)\,\psi^{\,\alpha}(|x|)\,dm(x)\,,
$$
откуда, переходя к пределу при $m\rightarrow\infty,$ получим
соотношение
$$
M_{\alpha}(f(\Gamma_{E_{j}}))\leqslant\, {\cal
S}(r_j):=\frac{1}{{I(r_j,
\varepsilon_1)}^n}\int\limits_{r_j<|x|<\varepsilon_1}\,Q(x)\,\psi^{\,\alpha}(\vert
x\vert)\,dm(x).
$$
%
В силу условия (\ref{eq4*}),  ${\cal S}(r_j)\,\rightarrow\, 0$ при
$j\rightarrow \infty.$ Окончательно, по \cite[предложение~10.2,
гл.~II]{Ri},
\begin{equation}\label{eq8C}
{\rm cap}_{\alpha}\,f(E_j)\,\rightarrow\,0\,, j\rightarrow\infty\,.
\end{equation}
С другой стороны, поскольку отображение $f$ ограничено, множество
$\overline{{\Bbb R}^n}\setminus f(D\setminus\{0\})$ имеет
положительную ёмкость, так что ввиду \cite[лемма~2.1]{GSS}
\begin{equation}\label{eq7B}
{\rm cap}_{\alpha}\,(f(D\setminus\{0\}), f(C_j))\geqslant\delta>0
\end{equation}
при всех $j\in {\Bbb N}$ и некотором $\delta>0.$ Соотношения
(\ref{eq8C}) и (\ref{eq7B}) противоречат друг другу, что опровергает
исходное предположение.~$\Box$
\end{proof}

\medskip
Из лемм \ref{lemma4} и \ref{thOS4.1} и теоремы \ref{th4} вытекает
следующее утверждение.

\begin{theorem}\label{th5}
{\sl\, Пусть $n\ge 3,$ $x_0\in D,$ и пусть отображение
$f:D\setminus\{x_0\}\rightarrow {\Bbb R}^n$ класса $W_{loc}^{1,
\varphi}(D)$ с конечным искажением является ограниченным, открытым,
дискретным и замкнутым. Предположим, $n-1<\alpha\leqslant n.$ Тогда
$f$ имеет непрерывное продолжение в точку $x_0,$ если имеет место
условие (\ref{eqOS3.0a})
и, кроме того, найдётся измеримая по Лебегу функция $Q:D\rightarrow
[0, \infty],$ такая что $K_{I,\alpha}(x, f)\leqslant Q(x)$ при почти
всех $x\in D,$ некотором $\varepsilon_0>0$ и всех $\varepsilon\in
(0, \varepsilon_0)$ выполнены условия (\ref{eq9}). Аналогичное
заключение имеет место, если $Q\in FMO(0).$}
\end{theorem}

\medskip

\medskip
КОНТАКТНАЯ ИНФОРМАЦИЯ

\medskip
\noindent{{\bf Евгений Александрович Севостьянов} \\
Житомирский государственный университет им.\ И.~Франко\\
ул. Большая Бердичевская, 40 \\
г.~Житомир, Украина, 10 008 \\ тел. +38 066 959 50 34 (моб.),
e-mail: esevostyanov2009@mail.ru}


\begin{thebibliography}{99}
{\small

\bibitem{KRSS} {\it Ковтонюк~Д.А., Рязанов~В.И., Салимов~Р.Р.,
Севостьянов~Е.А.} К теории классов Орлича--Соболева // Алгебра и
анализ. -- 2013. -- \textbf{25}, № 6. -- С.~50--102.

\bibitem{KR$_1$} {\it Kovtonuyk~D. and Ryazanov~V.} New modulus estimates
in Orlicz-Sobolev classes // Annals of the University of Bucharest
(mathematical series). -- 2014. -- \textbf{5 (LXIII)}. --
P.~131--135.

\bibitem{MRSY} {\it Martio~O., Ryazanov~V., Srebro~U. and Yakubov
~E.} Moduli in Modern Mapping Theory. -- New York: Springer Science
+ Business Media, LLC, 2009.

\bibitem{Vu$_1$} {\it Vuorinen M.} Exceptional sets and boundary behavior of quasiregular
mappings in $n$-space // Ann. Acad. Sci. Fenn. Ser. A 1. Math.
Dissertationes. -- 1976. -- \textbf{11}. -- P. 1--44.

\bibitem{IM} {\it Iwaniec T. and Martin G.}
Geometrical Function Theory and Non-Linear Analysis. --  Oxford:
Claren\-don Press, 2001. -- 552 p.

\bibitem{Ku} {\it Куратовский К.} Топология, т. 2. -- М.: Мир, 1969.


\bibitem{Cal} {\it Calderon A.P.} On the differentiability of absolutely
continuous functions // Riv. Math. Univ. Parma. -- 1951. --
\textbf{2}. -- P. 203--213.


\bibitem{Fu} {\it Fuglede~B.}  Extremal length and functional
completion // Acta Math. -- 1957. -- \textbf{98}. -- P.~171--219.

\bibitem{Ge$_3$} {\it Gehring~F.W.} Rings and quasiconformal mappings in
space // Trans. Amer. Math. Soc. -- 1962. -- \textbf{103}. --
P.~353--393.


\bibitem{GS} {\it Golberg~A. and Salimov~R.} Topological
mappings of integrally bounded $p$-moduli // Ann. Univ. Buchar.
Math. Ser. -- 2012. -- \textbf{3(LXI)}, no.~1. -- P.~49–-66.


\bibitem{Zi} {\it Ziemer~W.P.} Extremal length and conformal
capacity // Trans. Amer. Math. Soc. -- 1967. -- \textbf{126}, no. 3.
-- P.~460--473.

\bibitem{Zi$_1$}  {\it Ziemer~W.P.} Extremal length and
$p$-capacity // Michigan Math. J. -- 1969. --  \textbf{16}. --
P.~43--51.

\bibitem{Shl}  {\it Шлык~В.А.} О равенстве $p$-емкости и
$p$-модуля // Сиб. матем. журн. -- 1993.  \textbf{34}, № 6. -- С.
216-–221


\bibitem{Fe} {\it Федерер Г.} Геометрическая теория меры.
-- Москва: Наука, 1987.

\bibitem{Re} {\it Решетняк Ю.Г.} Пространственные отображения с
ограниченным искажением. -- Новосибирск: Наука, 1982.


\bibitem{Maz} {\it Мазья~В.Г.}  Пространства Соболева. --
Ленинград: Издательство ленинградского университета, 1985. -- 416 с.


\bibitem{MS} {\it Martio~O., Srebro~U.} Periodic quasimeromorphic
mappings // J. Analyse Math. -- 1975. -- \textbf{28}. -- P.~20--40.


\bibitem{IR} {\it Игнатьев~А., Рязанов~В.} Конечное среднее колебание в теории
отображений // Укр. матем. вестник. -- 2005. -- \textbf{2}, № 3. --
С.~395--417.


\bibitem{RS$_1$} {\it Рязанов~В.И., Севостьянов~Е.А.} Равностепенно непрерывные классы
кольцевых $Q$-го\-ме\-о\-мор\-физ\-мов // Сиб. матем. ж. -- 2007. --
\textbf{48}, № 6. -- С. 1361--1376.

\bibitem{Sal} {\it Салимов~Р.Р.} Об оценке меры образа шара
// Сиб. матем. журн. -- 2012. -- \textbf{53}, № 4. -- С.~920–-930.}

\bibitem{Sev$_1$} {\it Севостьянов Е.А.} Об устранении изолированных особенностей классов
Орлича--Соболева с ветвлением // Укр. мат. журнал (принята к
публикации).

\bibitem{GSS} {\it Golberg~A., Salimov~R. and Sevost'yanov~E.} Singularities of
discrete open mappings with controlled $p$-module // J. Anal. Math.
-- 2015. -- \textbf{127.} -- P. 303--328.

\bibitem{Ri} {\it Rickman S.} Quasiregular mappings. -- Results in Mathematic and
Related Areas (3), 26. Berlin: Springer-Verlag, 1993.

\end{thebibliography}
\end{document}